\newtheorem{theorem}{Theorem}[section]
\newtheorem{lemma}[theorem]{Lemma}
\newtheorem{proposition}[theorem]{Proposition}
\newtheorem{corollary}[theorem]{Corollary}
\newtheorem{example}{Example}
\theoremstyle{remark}
\newtheorem*{remark}{Remark}
\newtheorem*{remarks}{Remarks}
\newcommand\qfor{\quad\text{ for }}
\newcommand \C{\mathbb{C}}
\newcommand \N{\mathbb{N}}
\newcommand \R{\mathbb{R}}
\newcommand \D{\mathbb{D}}
\newcommand \Z{\mathbb{Z}}
\newcommand*{\defeq}{\mathrel{\vcenter{\baselineskip0.5ex \lineskiplimit0pt
                     \hbox{\scriptsize.}\hbox{\scriptsize.}}}%
                     =}
\newcommand{\eqdef}{=\mathrel{\vcenter{\baselineskip0.5ex \lineskiplimit0pt
                     \hbox{\scriptsize.}\hbox{\scriptsize.}}}}
\newcommand{\red}[1]{{\color{black} #1}}
 \newcommand{\reddot}[1]{{\color{red} #1}}
\renewcommand{\subset}{\subseteq}
\newcommand\xqed[1]{%
  \leavevmode\unskip\penalty9999 \hbox{}\nobreak\hfill
  \quad\hbox{#1}}
\newcommand\exend{\xqed{$\triangle$}}
\begin{document}
\bibliographystyle{amsalpha}
%
%
%
%
\title[Connectivity of $I(f)$ in $\C^*$]{On the connectivity of the escaping set in the punctured plane}
\author[{V. Evdoridou \and D. Mart\'i-Pete \and D. J. Sixsmith}]{Vasiliki Evdoridou \and David Mart\'i-Pete \and David J. Sixsmith}

\address{School of Mathematics and Statistics\\ The Open University\\
Milton Keynes MK7 6AA\\
UK
\textsc{\newline \indent \href{https://orcid.org/0000-0002-5409-2663}{\includegraphics[width=1em,height=1em]{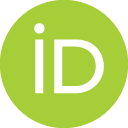} {\normalfont https://orcid.org/0000-0002-5409-2663}}}}
\email{vasiliki.evdoridou@open.ac.uk}

\address{Institute of Mathematics of the Polish Academy of Sciences\\ ul. \'Sniadeckich~8\\
00-656 Warsaw\\ Poland
\textsc{\newline \indent \href{https://orcid.org/0000-0002-0541-8364}{\includegraphics[width=1em,height=1em]{orcid2.png} {\normalfont https://orcid.org/0000-0002-0541-8364}}}}
\email{dmartipete@impan.pl}

\address{Department of Mathematical Sciences \\
	 University of Liverpool \\
   Liverpool L69 7ZL\\
   UK 
\newline \indent \href{https://orcid.org/0000-0002-3543-6969}{\includegraphics[width=1em,height=1em]{orcid2.png} {\normalfont https://orcid.org/0000-0002-3543-6969}}} 
\email{djs@liverpool.ac.uk}

\thanks{The first author was supported by Engineering and Physical Sciences Research Council grant EP/R010560/1. The second author was supported by the Japan Society for the Promotion of Science grant-in-aid 16F16807 and by the National Science Centre, Poland, grant 2016/23/P/ST1/ 04088 under the POLONEZ programme which has received funding from the EU\;\protect\includegraphics[width=.025\linewidth]{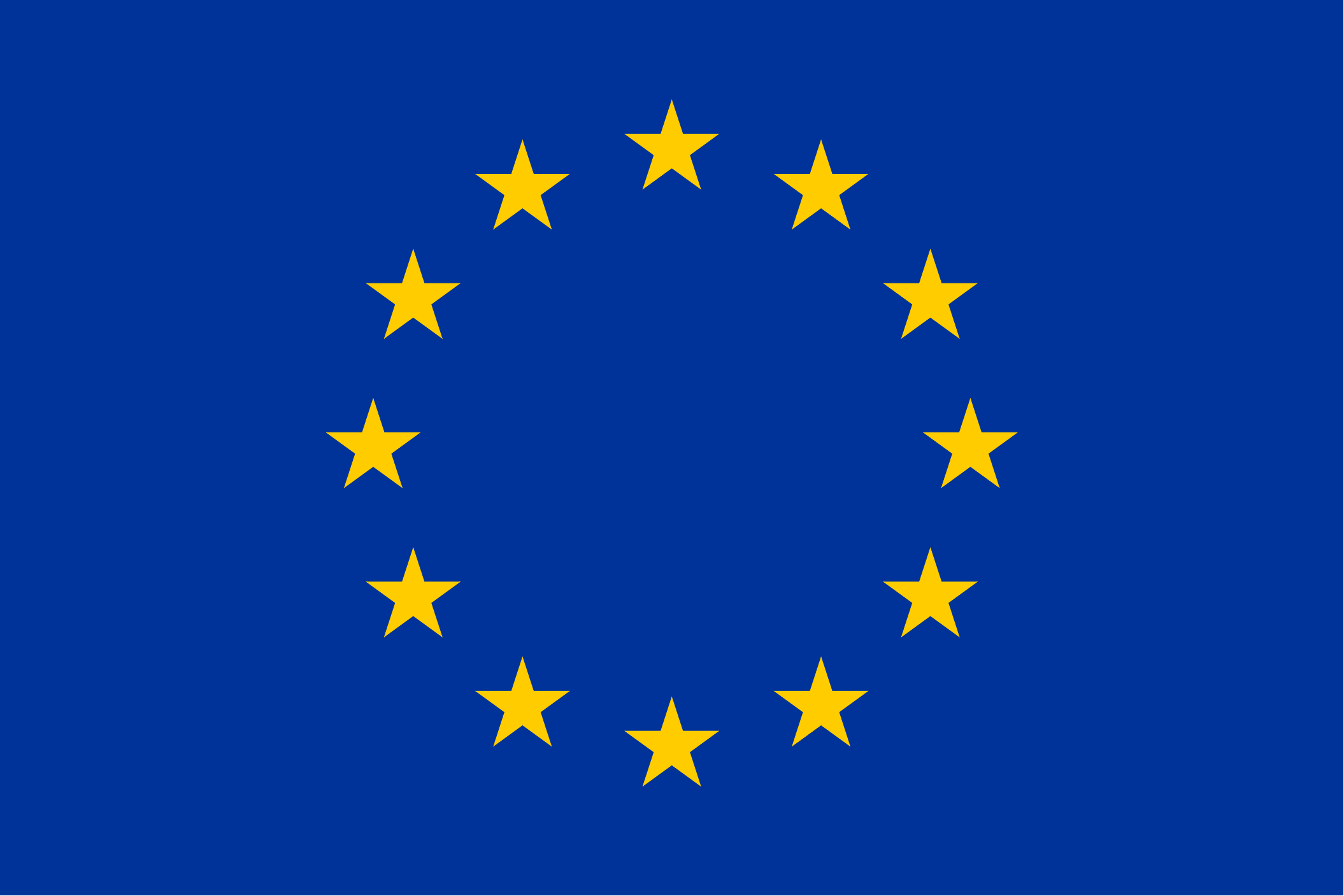} Horizon 2020 research and innovation programme under the MSCA grant agreement No. 665778.\vspace{3pt}\\ 2010 Mathematics Subject Classification. Primary 37F10; Secondary 30D05.\vspace{3pt}\\ Key words: holomorphic dynamics, escaping set, punctured plane, connectivity.\vspace{3pt}\\ }
%
%
%
%
\begin{abstract}
We consider the dynamics of transcendental self-maps of the punctured plane, $\C^*=\C\setminus \{0\}$. We prove that the escaping set $I(f)$ is either connected, or~has infinitely many components. We also show that $I(f)\cup \{0,\infty\}$ is either connected, or has exactly two components, one containing $0$ and the other $\infty$. This gives a trichotomy regarding the connectivity of the sets $I(f)$ and $I(f)\cup \{0,\infty\}$, and we give examples of functions for which each case arises. 

Finally, whereas
Baker domains of transcendental entire functions are simply connected, we
show that Baker domains can be doubly connected in $\C^*$ by cons\-tructing the first
such example. We also prove that if $f$ has a doubly connected Baker domain, then its closure contains both $0$ and $\infty$, and hence $I(f)\cup\{0,\infty\}$ is connected.\vspace*{-10pt}
\end{abstract}
\maketitle
%
%
%
\section{Introduction}
Let $S$ be the complex plane, $\C$, or the punctured plane, $\C^*\defeq\C\setminus\{0\}$, and suppose that $f:S\to S$ is a holomorphic function such that $\widehat{\C}\setminus S$ consists of essential singularities of $f$, where $\widehat{\C}\defeq\C\cup\{\infty\}$ is the Riemann sphere. When $S=\C$, $f$ is a transcendental entire function, and when $S=\C^*$, we say that $f$ is a \textit{transcendental self-map of $\C^*$}. This paper concerns the iteration of this second class of functions, first studied by R{\r{a}}dstr\"om in \cite{radstrom53}. We define the \textit{Fatou set} of $f$ by
$$
F(f)\defeq\{z\in S\, :\, \{f^n\}_{n\in\mathbb{N}} \text{ is a normal family in an open neighbourhood of } z\},
$$
and we define the \textit{Julia set} of $f$ as its complement in $S$, that is, \mbox{$J(f)\defeq S\setminus F(f)$}. We use the term \textit{Fatou component} to refer to each connected component of $F(f)$. For more background and definitions, we refer to \cite{bergweiler93}.

For a transcendental entire function $f$, the \emph{escaping set} of $f$ is defined by
$$
I(f) \defeq \{z\in\C : f^n(z)\to\infty \text{ as } n\to\infty\}.\vspace*{5pt}
$$
Eremenko \cite{eremenko89} was the first to study this set in full generality. He showed that $I(f)\neq\emptyset$, $J(f)=\partial I(f)$, and that the components of $\overline{I(f)}$ are all unbounded. He conjectured that, in fact, all the components of $I(f)$ are unbounded. Although significant progress has been made on this important conjecture, it remains open, and has motivated much research on transcendental dynamics in recent years. 

It is straightforward to see that Eremenko's conjecture holds whenever $I(f)$ is connected. Because of this property and the relation between $I(f)$ and $J(f)$ discussed above, it is natural to study the connectivity of this set. Rippon and Stallard \cite[Corollary~5.1~(a)]{rippon-stallard11} (see also \cite[Theorem~1.3]{rippon-stallard18}) showed that either $I(f)$ is connected, or has infinitely many components. There exist several examples of transcendental entire functions with a connected escaping set; for example, this is the case for the exponential function \cite{rempe11}. Furthermore, for many functions $I(f)$ is a \textit{spider's web}, that is, a connected set that separates every point of $\C$ from $\infty$; see, for example, \cite{Vasso1}. Rippon and Stallard \cite{rippon-stallard11} also proved that $I(f)\cup\{\infty\}$ is a connected subset of $\widehat{\C}$; note that this does not rule out the possibility that $I(f)$ has a bounded component. 

Now, suppose that $f$ is a transcendental self-map of $\C^*$. Many authors have studied the dynamics of these maps, and shown that there are many similarities with the dynamics of transcendental entire functions, though also striking differences. In line with these studies, our principle goal in this paper is to generalise the results mentioned above to the escaping set of $f$, which in this setting is defined~as
\[
I(f) \defeq \{z\in\C^* : \omega(z,f)\subseteq \{0,\infty\}\},
\]
where $\omega(z,f) \defeq \bigcap_{n\in\mathbb{N}}\overline{\{f^k(z)\, :\, k\geqslant n\}}$, and the closure is taken in $\widehat{\C}$. This set was studied extensively in \cite{martipete1} where, in analogy with Eremenko's results, it was shown that $I(f)\neq\emptyset$, $J(f)=\partial I(f)$ and all the components of $\overline{I(f)}$ are \emph{unbounded in $\C^*$}; in other words, their closure in $\widehat{\C}$ meets $\{0,\infty\}$.

Note that, unlike the escaping set of a transcendental entire function, the escaping set of a transcendental self-map of $\C^*$ can be partitioned in a natural way into uncountably many non-empty disjoint sets that are completely invariant; recall that a set $X$ is \textit{completely invariant} when $z \in X$ if and only if $f(z) \in X$. Set $\N_0 \defeq \N \cup \{0\}$. For every $z\in I(f)$, we define the \textit{essential itinerary} of $z$ as the sequence $e=(e_n)_{n\in\N_0}\in \{0,\infty\}^{\mathbb{N}_0}$ given by, for $n\in\mathbb{N}_0$,
\[
e_n \defeq \left\{
\begin{array}{ll}
0,& \text{ if } |f^n(z)|\leqslant 1,\vspace{5pt}\\
\infty,& \text{ if } |f^n(z)|> 1.
\end{array}
\right.
\]
For each $e \in \{0, \infty\}^{\N_0}$, the set of escaping points whose essential itinerary is eventually a shift $\sigma^k(e)$ of $e$ (here $\sigma$ is the \textit{Bernoulli shift} that removes the first symbol of a sequence and moves all the other symbols one position to the left) is 
\[
I_e(f) \defeq \{z \in I(f) : \text{there exist } \ell, k \geqslant 0,\ |f^{n+\ell}(z)| > 1 \Leftrightarrow e_{n+k} = \infty \text{ for all } n \geqslant 0\}.
\]
We call $I_e(f)$ the \emph{little escaping set} with essential itinerary $e$, using the terminology from \cite{dananddave}. In the particular cases where $e$ is the constant sequence $\overline{0}$ and $\overline{\infty}$, we denote the set $I_e(f)$ by $I_0(f)$ and $I_\infty(f)$, respectively.

Mart\'i-Pete \cite{martipete1} showed that for each $e\in\{0,\infty\}^{\mathbb{N}_0}$, we have $I_e(f)\neq \emptyset$, $J(f)=\partial I_e(f)$ and all components of $\overline{I_e(f)}$ are unbounded. Note that, although there are uncountably many non-empty disjoint subsets $I_e(f)\subseteq I(f)$, several components of different sets $I_e(f)$ may lie in the same component of $I(f)$ (this is the case, for example, when $I(f)$ is connected). 

Our first result concerns the connectivity of these sets.
\begin{theorem}
\label{thm:I-connected-or-infinitely-many-comp}
Let $f$ be a transcendental self-map of $\C^*$. The set $I(f)$ is either connected or has infinitely many components. Similarly, for each~\mbox{$e \in \{0,\infty\}^{\mathbb{N}_0}$}, the set $I_e(f)$ is either connected or has infinitely many components. 
\end{theorem}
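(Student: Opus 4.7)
Following the strategy that Rippon and Stallard used in \cite[Corollary~5.1(a)]{rippon-stallard11} for transcendental entire functions, I would treat both assertions uniformly. Let $A$ denote either $I(f)$ or some fixed $I_e(f)$. By the results of \cite{martipete1}, $A$ is completely invariant under $f$, satisfies $\partial A=J(f)$, and has the property that every component of its closure is unbounded in $\C^*$. Suppose for contradiction that $A$ has $n$ components $K_1,\ldots,K_n$ with $2\leqslant n<\infty$.

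First I would reduce to the case where each individual component is completely invariant. Since each $K_i$ is connected and $A$ is forward invariant, $f$ sends $K_i$ into a single component $K_{\pi(i)}$, defining a map $\pi$ on $\{1,\ldots,n\}$. The fact that $f$ omits only finitely many values of $\C^*$ (by the Great Picard Theorem applied at each essential singularity) shows that every $K_j$ has a nonempty preimage inside $A=f^{-1}(A)$, forcing $\pi$ to be surjective and hence a permutation. Replacing $f$ by $g\defeq f^{n!}$, the induced permutation becomes the identity, so $g(K_i)\subseteq K_i$; combined with $g^{-1}(A)=A$ and the pairwise disjointness of the $K_i$, each $K_i$ is completely invariant under $g$, and $J(g)=J(f)$.

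Next I would exploit the blow-up property of the Julia set: for every open $U\subseteq\C^*$ meeting $J(g)$ there is $m\in\N$ with $g^m(U)\supseteq\C^*\setminus E$ for some finite set $E$. Since every $K_i$ is uncountable, $g^m(U)\cap K_i\neq\emptyset$, and hence $U\cap K_i\neq\emptyset$ by backward invariance of $K_i$ under $g$. Therefore $J(f)\subseteq\overline{K_i}$ for every $i$, and combining with $\partial K_i\subseteq\partial A=J(f)$ gives $\overline{K_i}=K_i\cup J(f)$: all of the $n$ component closures share the entire Julia set.

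The main obstacle is the final step: turning the above configuration into a contradiction. Here I would localise the argument at a repelling periodic point $z_0\in J(f)$ of $g$ (whose existence is guaranteed by the density of repelling periodic points in $J(f)$) and, after passing to a further iterate so that $z_0$ is fixed, use a Koenigs linearisation to conjugate $g$ near $z_0$ to multiplication by the multiplier $\lambda$ with $|\lambda|>1$. Each $K_i$ then corresponds to a nonempty subset $L_i$ of a punctured neighborhood of $0$, completely invariant under $w\mapsto\lambda w$ and satisfying $0\in\overline{L_i}$, and the $L_i$ are pairwise disjoint. A local topological analysis of such a family, modelled on the argument in \cite{rippon-stallard11}, then rules out the existence of $n\geqslant 2$ such sets and yields the contradiction. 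Since the analysis takes place in a small neighborhood of $z_0\in\C^*$ well away from both essential singularities, the presence of two such singularities causes no additional difficulty.
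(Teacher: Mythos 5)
There is a genuine gap, and it sits exactly where you flag ``the main obstacle'': the proposed endgame via Koenigs linearisation cannot work as described. After conjugating to $w\mapsto\lambda w$ near a repelling fixed point, the data you would have about the sets $L_i$ --- pairwise disjoint, invariant under multiplication by $\lambda$, accumulating at $0$ --- carries no topological obstruction for any finite $n$: two disjoint rays from the origin (or two disjoint logarithmic spirals) already satisfy all of these properties. So no ``local topological analysis'' of that configuration can rule out $n\geqslant 2$, and this step would fail. What you have missed is that the contradiction is already available from your step 3. Since $A$ is assumed to have \emph{finitely many} components, each $K_i$ is relatively open in $A$ (it is the complement in $A$ of the union of the finitely many other components, each of which is relatively closed). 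Pick $z_1\in K_1\cap J(f)$, which exists because $A\cap J(f)\neq\emptyset$ by \cite{martipete1} (relabelling if necessary); relative openness gives $r>0$ with $D(z_1,r)\cap A\subseteq K_1$, while your conclusion $J(f)\subseteq\overline{K_2}$ forces $D(z_1,r)\cap K_2\neq\emptyset$. That one line finishes the proof, and the Koenigs machinery is not needed.

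Two further comments. First, your route is considerably longer than necessary: the paper does not reduce to completely invariant components at all. It takes $z_1\in E_1\cap J(f)$, a disc $D(z_1,r)$ meeting no other component, blows it up (Lemma~\ref{lem:blow-up}) so that $f^N(D(z_1,r))$ contains a point of $E_1$ and a point of $E_2$, pulls back to find preimages of these points inside $D(z_1,r)\cap E\subseteq E_1$, and concludes that the connected set $f^N(E_1)\subseteq E$ meets two distinct components --- a contradiction with no permutation argument, no passage to $f^{n!}$, and only the hypothesis $f^{-1}(E)\subseteq E$ rather than full invariance of each component. Second, a minor point: in $\C^*$ the blow-up property has no exceptional set (Lemma~\ref{lem:blow-up}; Picard's theorem rules out exceptional points for self-maps of $\C^*$), so your appeal to the uncountability of each $K_i$ --- which is not obviously justified, since a priori a component of $I(f)$ could be a singleton --- is unnecessary and should be removed.
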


\begin{remark}
For a transcendental entire function $f$, it is an open question whether $I(f)$ necessarily has uncountably many components if it is disconnected, although it can be shown that the intersection of $I(f)$ with the complement of any disc that meets the Julia set must have uncountably many components (see \cite[Theorem~1.1~and~Theorem~1.2]{rippon-stallard17}). This is also an open question in our setting.
 \end{remark}

Our second result concerns the connectivity of the union of these sets with the set of essential singularities $\{0,\infty\}$, considered as a subset of $\widehat{\C}$.
\begin{theorem}
\label{thm:I-and-infinity}
Let $f$ be a transcendental self-map of $\C^*$. The set $I(f)\cup\{0,\infty\}$ is either connected or it consists of two components $I^0(f)$ and $I^\infty(f)$ that contain $0$ and $\infty$, respectively. Similarly, for each $e \in \{0,\infty\}^{\mathbb{N}_0}$, the set $I_e(f)\cup\{0,\infty\}$ is either connected or it consists of two components $I_e^0(f)$ and $I_e^\infty(f)$ that contain $0$ and $\infty$, respectively. Furthermore, $I^0(f)\cap I_e(f)\neq \emptyset$ and $I^\infty(f)\cap I_e(f)\neq \emptyset$ for all $e\in\{0,\infty\}^{\N_0}$.
\end{theorem}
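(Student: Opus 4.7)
The plan is to show that every component of $I_e(f)$ in $\C^*$ has closure in $\widehat{\C}$ meeting $\{0,\infty\}$ --- a strengthening of the Mart\'i-Pete result \cite{martipete1} cited in the introduction, which asserts only that every component of $\overline{I_e(f)}^{\C^*}$ has this property. Once this strengthening is in hand, the theorem follows from a direct topological argument: for any $w\in I_e(f)$ with component $C_w$ in $I_e(f)$ (viewed in $\C^*$), the set $C_w\cup\{0\}$, respectively $C_w\cup\{\infty\}$, is a connected subset of $I_e(f)\cup\{0,\infty\}$ containing both $w$ and the essential singularity that lies in $\overline{C_w}^{\widehat{\C}}$. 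Hence $w$ lies either in the component $I_e^0(f)$ of $I_e(f)\cup\{0,\infty\}$ containing $0$ or in the component $I_e^\infty(f)$ containing $\infty$, so $I_e(f)\cup\{0,\infty\} = I_e^0(f)\cup I_e^\infty(f)$ consists of at most two components. The same strategy, applied to $I(f) = \bigcup_e I_e(f)$, yields the corresponding statement for $I(f)\cup\{0,\infty\}$, with components $I^0(f)$ and $I^\infty(f)$.

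For the final clause, if $I^0(f) = I^\infty(f)$ there is nothing to prove, so assume they are distinct. I would use the compactness of $\overline{I(f)\cup\{0,\infty\}}^{\widehat{\C}}$ in $\widehat{\C}$, together with the fact that its components are the two disjoint closed sets $\overline{I^0(f)}^{\widehat{\C}}$ and $\overline{I^\infty(f)}^{\widehat{\C}}$, to deduce that $I^0(f)$ and $I^\infty(f)$ are clopen in $I(f)\cup\{0,\infty\}$. In particular, there is an open neighborhood $V$ of $0$ in $\widehat{\C}$ with $V\cap(I(f)\cup\{0,\infty\})\subseteq I^0(f)$. Since $J(f)=\partial I_e(f)\subseteq\overline{I_e(f)}$ and $J(f)$ accumulates at both essential singularities (as $0$ and $\infty$ are essential singularities of $f$, the Julia set accumulates at both), there exist points of $I_e(f)$ inside $V$; these lie in $I^0(f)\cap I_e(f)$. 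The argument that $I^\infty(f)\cap I_e(f)\neq\emptyset$ is symmetric.

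The principal obstacle is the upgrade from unboundedness of components of $\overline{I_e(f)}^{\C^*}$ to unboundedness of components of $I_e(f)$ itself. The component $C_w$ of $I_e(f)$ might, a priori, be a bounded proper subset of the unbounded component $D_w$ of $\overline{I_e(f)}^{\C^*}$ containing it, separated from the rest of $D_w$ by points of $J(f)$ which are not in $I_e(f)$; in that case $\overline{C_w}^{\widehat{\C}}$ need not meet $\{0,\infty\}$, and $C_w$ would be an ``orphan'' component of $I_e(f)\cup\{0,\infty\}$ disjoint from both essential singularities. Ruling this out requires a genuinely dynamical ingredient, likely exploiting the complete invariance $f^{-1}(I_e(f))=I_e(f)$ together with the Great Picard Theorem at $0$ and $\infty$ to propagate connectedness through preimages: starting from $C_w$, one expects the preimages under large iterates of $f$ to produce a connected subset of $I_e(f)$ reaching arbitrarily close to at least one essential singularity, thereby preventing any bounded component of $I_e(f)$ from being trapped inside $D_w$.
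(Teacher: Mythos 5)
Your plan hinges on proving that every component of $I_e(f)$ has closure in $\widehat{\C}$ meeting $\{0,\infty\}$, and you correctly identify this as the missing ingredient. But this is precisely the $\C^*$-analogue of Eremenko's conjecture, which is open (the paper's introduction emphasises that only the components of $\overline{I_e(f)}$ are known to be unbounded in $\C^*$, and that Rippon--Stallard's $I(f)\cup\{\infty\}$ result for entire maps ``does not rule out the possibility that $I(f)$ has a bounded component''). Your sketch for closing the gap---propagating connectedness through preimages via complete invariance and Picard---does not work: while $f^{-n}(C_w)\subseteq I_e(f)$, these preimages are in general disjoint from $C_w$ and do not produce a connected set containing $C_w$, so there is no mechanism forcing $C_w$ itself to be unbounded. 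If such an argument existed, it would settle Eremenko's conjecture.

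The paper sidesteps this entirely by proving a genuinely weaker statement that is nevertheless sufficient: if $G\subseteq\C^*$ is a domain bounded in $\C^*$ with $G\cap I_e(f)\neq\emptyset$, then $\partial G\cap I_e(f)\neq\emptyset$ (Corollary~\ref{cor:bdd-meet-I-bdary-meet-I-too}). This splits into two cases. If the component of $I_e(f)$ inside $G$ meets $J(f)$, the blowing-up property and the fact that every component of the \emph{fast} escaping set $A_e(f)$ is unbounded in $\C^*$ (Lemma~\ref{lem:fast-escaping-set}, which is a theorem, not a conjecture) force $\partial G$ to meet $I_e(f)$. If it does not meet $J(f)$, it is a wandering domain contained in $\widetilde{I}_e(f)$, and the harmonic-measure result Theorem~\ref{thm:escaping-points-in-WD-boundaries} shows its boundary meets $I_e(f)$. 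Theorem~\ref{thm:I-and-infinity} then follows by applying Corollary~\ref{cor:bdd-meet-I-bdary-meet-I-too} to a hypothetical open set $H$ bounded away from $0$ and $\infty$ containing a component of $I_e(f)\cup\{0,\infty\}$. Note this does \emph{not} establish that individual components of $I_e(f)$ are unbounded, nor is that needed; the conclusion about $I_e(f)\cup\{0,\infty\}$ is strictly weaker. Your argument for the final clause (points of $I_e(f)$ near $0$ and near $\infty$) is in the right spirit, though the paper argues more directly via $A_e(f)$ and Picard's theorem applied to the open sets $H_0, H_\infty$ already constructed in the separation.
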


\begin{remark}\normalfont
Note that it is easy to deduce the same connectedness properties for the \emph{fast escaping set} $A(f)$ and the \emph{little fast escaping sets} $A_e(f)$, for $e\in\{0,\infty\}^{\N_0}$, instead of $I(f)$ and $I_e(f)$, respectively, where $f$ is a transcendental self-map of $\C^*$. However, the definitions of these sets are complicated, and so we refer to \cite[Definition~1.2]{martipete1} for more details.
\end{remark}

It follows from the previous two theorems that there are three possibilities regarding the connectivity of the sets $I(f)$ and $I(f)\cup\{0,\infty\}$, namely:
\begin{enumerate}[label=(I{\arabic*})]
\item $I(f)\cup\{0,\infty\}$ and $I(f)$ are both connected;\label{I1}
\item $I(f)\cup\{0,\infty\}$ is connected and $I(f)$ has infinitely many components;\label{I2}
\item $I(f)\cup\{0,\infty\}$ has two components and $I(f)$ has infinitely many components.\label{I3}
\end{enumerate}

In Section~\ref{sec:examples}, we give several examples to show that all three cases are attained, as well as to illustrate different properties of these sets; some of the examples have appeared before in the literature, but others are new. In \cite{ems19}, we proved that the function from \cite[Example~3.3]{martipete3} has the property that its escaping set is a $\C^*$-\textit{spider's web}, that is, a connected set which separates every point of $\C^*$ from $\{0,\infty\}$, and hence is an example of type \ref{I1} (see Example~\ref{ex:1}). When $I(f)$ is disconnected, the set $I(f)\cup\{0,\infty\}$ can be connected or disconnected. We give a function $f$ such that $\mathbb{R}\setminus\{0\}\subseteq I(f)$ and $i\mathbb{R}\setminus\{0\}\subseteq \C^*\setminus I(f)$, and hence $f$~is of type \ref{I2} (see Example~\ref{ex:4} and Figure~\ref{f2}). On the other hand, to show that $I(f)\cup \{0,\infty\}$ is disconnected, that is, $f$ is a function of type \ref{I3}, it suffices to find a continuum in $\C^*\setminus I(f)$ that separates $0$ from $\infty$. We discuss two different examples of situations in which this happens. First, observe that this is the case when $f$ has a doubly connected Fatou component in $\C^*\setminus I(f)$; the function in Example~\ref{ex.disjoint-type} has a doubly connected basin of attraction. Another situation in which $I(f)\cup \{0,\infty\}$ is disconnected is when $f$ has an invariant curve around the origin; the functions in Example \ref{ex.arnold} all satisfy that the unit circle is invariant. Finally, we give an example of a function $f$~for which $I_\infty(f)$ is connected, but not a spider's web (see Example~\ref{ex.5}). 

%

We emphasise that although $I(f)$ and $J(f)$ satisfy similar properties, the connectivity of $I(f)$ is independent of that of $J(f)$. Recall that if $f$ is a transcendental entire function, then $J(f)$ is either connected or has uncountably many components \cite{baker-dominguez00} and $J(f)\cup\{\infty\}$ is connected if and only if~$f$~has no multiply connected Fatou components \cite{kisaka98}. The function $f(z)=\sin z$ is an example for which $J(f)$ is connected \cite{dominguez97} (but not a spider's web \cite{osborne13b}) and $I(f)$ is disconnected as $\R\subseteq \C\setminus I(f)$. On the other hand, for \textit{Fatou's function} $f(z)=z+1+e^{-z}$ we know that $J(f)$ is disconnected (it is an uncountable union of disjoint curves), but $I(f)$ is connected (in fact, it is a spider's web \cite{Vasso1}). For transcendental self-maps of $\C^*$, Baker and Dom\'inguez \cite[Section~3]{baker-dominguez98} proved a similar trichotomy to our cases \ref{I1}, \ref{I2} and \ref{I3} but for $J(f)$, and gave examples of functions of each case:
\begin{enumerate}[label=(J{\arabic*})]
\item $J(f)\cup\{0,\infty\}$ and $J(f)$ are both connected;\label{J1}
\item $J(f)\cup\{0,\infty\}$ is connected and $J(f)$ has uncountably many components;\label{J2}
\item $J(f)\cup\{0,\infty\}$ has two components and $J(f)$ has uncountably many components.\label{J3}
\end{enumerate}
Observe that, for example, the functions in the complex Arnol'd standard family for which $J(f)=\C^*$ are of type \ref{I3} and \ref{J1} (see Example~\ref{ex.arnold}).\vspace*{-2pt}

\begin{remarks}\normalfont
\mbox{}
\begin{enumerate} 
\item[(i)] Many authors \cite{osborne13,osborne-rippon-stallard17,dave} have studied the connectivity of other dynamically meaningful sets, such as the sets of bounded or unbounded orbits, for transcendental entire functions. It would be interesting to study the connectivity of these sets for transcendental self-maps of $\C^*$.
\item[(ii)] In \cite{dananddave} it was shown that many properties of the dynamics of transcendental self-maps of $\C^*$ carry over to \emph{quasiregular} maps of punctured Euclidean space. It is natural to ask if the connectivity results of this paper can also be transferred into this wider setting.\vspace*{-2pt}
\end{enumerate}
\end{remarks}
Let $f$ be a transcendental entire function or a  transcendental self-map of $\C^*$. Suppose that $U$ is a Fatou component of $f$, and let $U_n$ be the Fatou component containing $f^n(U)$ for $n\in\N_0$. If $U \subset U_p$ for some minimal $p \in \N$, then we say that $U$ is \emph{periodic} of period~$p$. If $U$ is not periodic, but $U_k$ is periodic for some $k \in \N$, then we say that $U$ is \emph{preperiodic}. Otherwise we say that $U$ is a \emph{wandering domain}. If $U$ is periodic and meets $I(f)$, in which case $U$ is contained in $I(f)$, then we say that $U$ is a \emph{Baker domain}. In order to prove Theorem~\ref{thm:I-and-infinity}, we need a result concerning escaping points on the boundaries of Fatou components, which may be of independent interest. Observe first that Baker domains and escaping wandering domains are the only two types of Fatou components that lie in $I(f)$. Recall that for entire functions, Rippon and Stallard \cite[Theorem~1.1]{rippon-stallard11} proved that the boundaries of escaping wandering domains always contain escaping points; the problem of whether the boundaries of Baker domains always contain escaping points remains open (see \cite{bfjk15,rippon-stallard18}). Suppose that $f$ is a transcendental self-map of $\C^*$. We consider the following subset of $I_e(f)$,\vspace*{-1pt}
$$
\widetilde{I}_e(f)\defeq\{z \in I(f) : \text{there exists } \ell \geqslant 0,\ |f^{n+\ell}(z)| > 1 \Leftrightarrow e_{n+\ell} = \infty  \text{ for all } n \geqslant 0\},\vspace*{-1pt}
$$
which has the property that if a Fatou component $U$ meets $\widetilde{I}_e(f)$, then $U\subseteq \widetilde{I}_e(f)$ (see Lemma~\ref{lem:U-meets-I}). We call $\widetilde{I}_e(f)$ the \textit{immediate little escaping set} with essential itinerary $e$, in analogy to the term \textit{immediate basin of attraction}. Observe that, in general, these sets are not completely invariant. 
Moreover, $I_e(f)=\widetilde{I}_e(f)$ if and only if the sequence $e$ only has one symbol. The result we need to prove Theorem~\ref{thm:I-and-infinity} is the following.

\begin{theorem}
\label{thm:escaping-points-in-WD-boundaries}
Let $f$ be a transcendental self-map of $\mathbb{C}^*$, and let $U$ be a wandering domain of $f$ such that $U\subseteq \widetilde{I}_e(f)$ for some $e\in\{0,\infty\}^{\mathbb{N}_0}$. Then\vspace*{-3pt}
$$
\partial U\cap \widetilde{I}_e(f)\neq\emptyset.\vspace*{-3pt}
$$
Moreover, the set $\partial U \setminus \widetilde{I}_e(f)$ has zero harmonic measure relative to $U$. In particular, the set $\partial U \setminus I_e(f)$ has zero harmonic measure relative to $U$.
\end{theorem}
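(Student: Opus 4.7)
I follow the approach of Rippon--Stallard for transcendental entire functions \cite[Theorem~1.1]{rippon-stallard11}, adapted to the itinerary setting. I treat the simply connected case; for multiply connected $U$ the Riemann map is replaced by the universal covering map $\pi\colon \D\to U$, which exists since $U\subseteq \C^*$ is hyperbolic. Let $\phi\colon \D\to U$ be a Riemann map and let $U_n$ be the Fatou component containing $f^n(U)$. By hypothesis there exists $\ell\geq 0$ such that $|f^n(z)|>1\iff e_n=\infty$ for every $z\in U$ and every $n\geq \ell$; all analysis below concerns $n\geq \ell$.

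Set $g_n \defeq f^n\circ \phi \colon \D\to \C^*$ and
$$h_n \defeq \begin{cases} g_n, & e_n = 0, \\ 1/g_n, & e_n = \infty, \end{cases}$$
so that, by the strict maximum principle, $h_n\colon \D\to \D$ is bounded and non-vanishing. Since every $z\in U$ escapes to $\{0,\infty\}$ along itinerary $e$, one has $h_n(z)\to 0$ pointwise on $\D$, and Vitali's theorem promotes this to locally uniform convergence on $\D$.

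By Fatou's radial limit theorem, $\phi^*(\zeta)\in\partial U$ and $h_n^*(\zeta)$ (with $|h_n^*(\zeta)|\leq 1$) both exist for a.e.\ $\zeta\in \partial \D$. For a.e.\ $\zeta$ in the full-measure subset where $\phi^*(\zeta)\in J(f)\subset \C^*$, the orbit of $\phi^*(\zeta)$ under $f$ stays in $\C^*$ and, by continuity of $f^n$, $h_n^*(\zeta)$ equals the normalised value of $f^n(\phi^*(\zeta))$. Consequently, the theorem reduces to showing that $h_n^*(\zeta)\to 0$ as $n\to\infty$ for a.e.\ $\zeta\in \partial \D$: this forces $|h_n^*(\zeta)|<1$ for all sufficiently large $n$, placing $\phi^*(\zeta)\in\widetilde{I}_e(f)$ and giving the zero harmonic measure statement, since harmonic measure on $\partial U$ from any base point in $U$ is the push-forward of normalised Lebesgue measure on $\partial \D$ under $\phi^*$. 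Non-emptiness of $\partial U\cap \widetilde{I}_e(f)$ is then immediate.

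The main obstacle is establishing this a.e.\ boundary convergence $h_n^*(\zeta)\to 0$, which does \emph{not} follow from locally uniform convergence alone: any sequence of singular inner functions with diverging singular masses has $h_n\to 0$ on compacta yet $|h_n^*|\equiv 1$ a.e.\ on $\partial \D$. To overcome this one uses the iterative structure $g_{n+1}=f\circ g_n$, which in normalised form reads $h_{n+1}=F_{n+1}(h_n)$ for an explicit function $F_{n+1}$ depending only on $f$ and on $e_n, e_{n+1}$. An inner-function pathology for infinitely many $n$ would propagate through this iteration only if $f$ preserved the unit circle in a strong measure-theoretic sense, contradicting $U$ being wandering. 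Concretely, one combines Jensen's equality $\log|h_n(0)|=\tfrac{1}{2\pi}\int_0^{2\pi}\log|h_n^*|\,d\theta$ (valid because $h_n$ is non-vanishing) with a Lindel\"of-type theorem for boundary values of compositions, which together transfer the locally uniform convergence in $\D$ to pointwise a.e.\ boundary convergence via the dynamical iteration.
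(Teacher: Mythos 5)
Your reduction to radial limits of the normalised iterates $h_n$ is the right framework and matches the spirit of Rippon--Stallard, and you correctly identify the crucial difficulty: locally uniform convergence $h_n\to 0$ on $\D$ does \emph{not} by itself give $h_n^*\to 0$ a.e.\ on $\partial\D$. However, your proposed resolution of that obstacle does not work, and this is a genuine gap, not a detail. The ``Jensen's equality'' you invoke, $\log|h_n(0)|=\tfrac{1}{2\pi}\int_0^{2\pi}\log|h_n^*|\,d\theta$, is in fact false for a general non-vanishing $h_n\in H^\infty$: non-vanishing rules out a Blaschke factor, but the canonical factorisation $h_n=S_n O_n$ can still have a non-trivial singular inner factor $S_n$, and Jensen's formula then reads $\log|h_n(0)|=\tfrac{1}{2\pi}\int\log|h_n^*|\,d\theta-\sigma_n(\partial\D)$, where $\sigma_n$ is the singular mass. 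The singular mass escaping to infinity while $\int\log|h_n^*|$ stays bounded is \emph{exactly} the pathology you flag two sentences earlier; invoking the equality as if it held outright begs the question. The heuristic that an inner-function pathology ``would propagate only if $f$ preserved the unit circle'' is not made precise and does not substitute for an argument.

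What your proof is missing is the one structural fact that actually breaks the pathology: the forward images $U_n$ (or, along the subsequence the paper uses, $U_{n_k}$) are \emph{pairwise disjoint} simply connected domains, because $U$ is a wandering domain. Your argument never uses this. Disjointness is the engine of the paper's Lemma~\ref{lem:boundary} (quoted from \cite[Lemma~4.1]{osborne-sixsmith16}, generalising \cite[Theorem~1.1]{rippon-stallard11}), which delivers the harmonic-measure-zero conclusion directly; the paper's proof then consists mainly of arranging the hypotheses of that lemma: reducing to the simply connected case via Baker's result that at most one $U_n$ is doubly connected together with \cite[Theorem~4.3.8]{ransford95}, and passing to a subsequence $(n_k)$ with $e_{n_k}=\infty$ so that a single target $\xi=\infty$ can be used. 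Your side remark about replacing the Riemann map by a universal covering map for multiply connected $U$ would also complicate matters (the covering is not a homeomorphism onto $U$, so harmonic measure on $\partial U$ is not simply the push-forward of normalised Lebesgue measure on $\partial\D$); the paper avoids this by the pull-back reduction to simply connected components.
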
 

One of the striking differences between the iteration of transcendental entire functions and that of transcendental self-maps of $\C^*$ lies in the nature of their multiply connected Fatou components. Baker \cite{baker87} proved that all Fatou components of transcendental self-maps of $\C^*$ are either simply or doubly connected, and that there is at most one doubly connected Fatou component. Baker and Dom\'inguez \cite{baker-dominguez98} showed that if $U$ is a doubly connected periodic Fatou component that is bounded away from $0$ and $\infty$, then $U$ must be a \textit{Herman ring}, that is, a doubly connected domain on which the function is conjugated to an irrational rotation. 
However, there is no such restriction if $U$ is a doubly connected periodic Fatou component that is unbounded in $\C^*$. The first example of a doubly connected Fatou component in $\C^*$ was given by Baker \cite[Theorem~1.2]{baker87}, and was the basin of attraction of an attracting fixed point (see also Example~\ref{ex.disjoint-type}).

For transcendental entire functions, Baker \cite[Theorem~3.1]{baker84} proved that Baker domains are all simply connected. We construct a transcendental self-map of $\C^*$ with a doubly connected Baker domain; we are not aware of any previous such example.

\begin{theorem}
\label{thm:baker-domain}
There exists a transcendental self-map of $\C^*$ that has a doubly connected Baker domain.
\end{theorem}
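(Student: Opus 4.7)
The plan is to construct $f$ via its logarithmic lift. A holomorphic self-map $f$ of $\C^*$ lifts under the universal covering $\exp\colon\C\to\C^*$ to an entire function $F\colon\C\to\C$ satisfying $F(z+2\pi i)=F(z)+2\pi im$ for some $m\in\Z$. Taking $m=1$, a Fatou component $U$ of $f$ is doubly connected precisely when the corresponding Fatou component $V$ of $F$ is itself invariant under the translation $z\mapsto z+2\pi i$: in that case $\exp\colon V\to U$ is a covering of infinite degree and $\pi_1(U)\cong\Z$, whereas otherwise $\exp|_V$ is a biholomorphism onto $U$ and $U$ is simply connected. So the task reduces to building an entire $F$ with $F(z+2\pi i)=F(z)+2\pi i$ whose descended map $f=\exp\circ F\circ\log$ has essential singularities at both $0$ and $\infty$, and which possesses a Baker domain $V\subset\C$ with $V+2\pi i=V$.

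The natural starting point is Fatou's function $F_0(z)=z+1+e^{-z}$, which has the required equivariance and whose Baker domain contains the right half-plane, but which descends to $f_0(w)=ew\cdot e^{1/w}$; the latter satisfies $f_0(w)\sim ew$ as $w\to\infty$, so $\infty$ is a repelling fixed point rather than an essential singularity, and $f_0$ is not a transcendental self-map of $\C^*$. To remedy this I would consider the modification
\[
F(z) \;=\; z+1+e^{-z}+\varepsilon\, e^{-e^z}
\]
for a suitably small $\varepsilon>0$. The added term is $2\pi i$-periodic (since $e^{z+2\pi i}=e^z$), preserving the equivariance, and the descended map $f(w)=ew\exp\bigl(1/w+\varepsilon e^{-w}\bigr)$ has essential singularities at $0$ (from $e^{1/w}$) and at $\infty$ (since $e^{-w}$ has no limit as $w\to\infty$, forcing $|f(w)|$ to oscillate between arbitrarily small and arbitrarily large values as $w\to\infty$ in the left half-plane).

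The next step is to exhibit a Baker domain of $F$. The perturbation $\varepsilon\, e^{-e^z}$ has modulus $\varepsilon e^{-e^{\Re z}\cos(\Im z)}$, which is doubly exponentially small on the $2\pi i$-periodic family of horizontal strips $T_k=\{z:\Re z>R,\ |\Im z-2\pi k|<\pi/2-\delta\}$ for $k\in\Z$, where $\cos(\Im z)>\sin\delta>0$. Choosing $R$ large and $\varepsilon$ small, each $T_k$ is forward-invariant under $F$ with $F(z)\approx z+1$ there, so iterates of $F$ on $T_k$ tend to $+\infty$ in real part; hence $T_k$ is contained in a Fatou component $V_k$ of $F$ which is a Baker domain, and the equivariance yields $V_k=V_0+2\pi ik$.

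The hardest step is to show that $V_0+2\pi i=V_0$. The obstacle is that the perturbation $\varepsilon\, e^{-e^z}$ is wildly large on the intermediate bad strips $B_k=\{\Re z>R,\ \cos(\Im z)<0\}$ lying between consecutive $T_k$'s, so a priori the $V_k$'s could be pairwise disjoint Fatou components; were that the case $\exp\colon V_0\to U=\exp(V_0)$ would be a biholomorphism and $U$ would be simply connected. To force the $V_k$'s to coincide I would analyse the huge jumps $F(z)\approx\varepsilon\,e^{-e^z}$ on each $B_k$ and show, using the explicit form of $e^{-e^z}$, that these jumps deposit bad points into the union $\bigcup_{\ell} T_\ell$ of tame strips, so that the bad strips lie in the same Fatou component as the tame ones; combined with a topological argument using the $2\pi i$-equivariance, this will imply that there is a single Fatou component containing all the $T_k$'s, which is therefore $2\pi i$-periodic and descends to a doubly connected Baker domain of $f$. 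By the statement about the closure of doubly connected Baker domains established earlier in the paper, $\overline U$ then automatically contains $\{0,\infty\}$. If the direct analysis of the bad strips proves too delicate, an alternative strategy is to replace the explicit perturbation by an application of an approximation theorem of Arakelyan type on the cylinder $\C/2\pi i\Z$, designing $F$ so that the tame region is a single connected $2\pi i$-periodic set by construction.
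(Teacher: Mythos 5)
Your general strategy --- constructing $f$ via its logarithmic lift and arranging that a Baker domain $V$ of the lift satisfies $V + 2\pi i = V$ --- is genuinely different from the paper's, and your topological dictionary (a Fatou component $U$ of $f$ is doubly connected precisely when the corresponding component of $\exp^{-1}(U)$ is $2\pi i$-invariant) is correct. However, your specific construction has a fatal gap, and it is worth seeing exactly why. You start from Fatou's function and keep the equivariance $F(z + 2\pi i) = F(z) + 2\pi i$, so $m = 1$; but $m$ is precisely $\operatorname{ind}(f)$, so your descended map $f(w) = ew\exp\bigl(1/w + \varepsilon e^{-w}\bigr)$ has $\operatorname{ind}(f) = 1$. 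The paper's Lemma~\ref{lem:index} shows that any transcendental self-map of $\C^*$ with a doubly connected Baker domain must have $\operatorname{ind}(f) = 0$: a doubly connected Baker domain contains a curve of index one around the origin, this curve is eventually mapped into a simply connected absorbing set, and $\operatorname{ind}(f)^n = 0$ follows. So the step you correctly identify as the hardest --- forcing the strips $V_k$ to merge into a single $2\pi i$-invariant component --- is in fact impossible for your $f$: the index obstruction guarantees the $V_k$'s remain pairwise distinct simply connected Baker domains. No amount of analysis of the ``bad strips'' can rescue this.

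To make the lift strategy viable one must insist on $m = 0$, i.e. a $2\pi i$-periodic $F$, so that the descended map has index zero; but then Fatou's function is the wrong starting point (it has $m=1$), and the whole ``translation by one plus small perturbation'' picture has to be rebuilt from scratch. The paper does something closer in spirit to the fallback you mention at the end, but directly in the $w$-plane rather than on the cylinder: it uses an Arakelyan/Gaier-type approximation theorem (Lemma~\ref{lem:approx}) to produce an entire $g$ approximating the principal branch of $\log$ on a set $S$ containing a disc around a point $3R$ together with a right half-plane, and sets $f(z) = \exp\bigl(g(z+3R) + z^{-2}\bigr)$. Since $f$ is the exponential of a function holomorphic in an annulus, $\operatorname{ind}(f) = 0$ automatically. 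The estimates then show $\operatorname{Re} f(z) \geqslant \operatorname{Re} z + 2R$ on a set $S'$ containing the circle $\{|z| = 3R/4\}$ around the origin, so $S'$ lies in an invariant Baker domain, and the presence of that circle forces the Baker domain to be doubly connected --- a more direct route that sidesteps any analysis of a ``bad region'' entirely.

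Incidentally, the closure result you invoke at the end (that $\widehat{U}$ contains $\{0,\infty\}$) is Theorem~\ref{thm:bd-properties}, which comes \emph{after} Theorem~\ref{thm:baker-domain} in the paper; it is not needed to prove existence, so this is only an ordering slip, but be careful not to rely on it in the construction itself.
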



Observe that every Baker domain in $\C^*$ contains a simply connected \textit{absorbing set} $H$, that is, $f(H)\subseteq H$ and for every compact set $K\subseteq U$, there exists $n\in\N_0$ such that $f^n(K)\subseteq H$ (see Lemma~\ref{lem:abs-dom}).

Finally, we prove a connection between the fact that a transcendental self-map~$f$ of $\C^*$ has a doubly connected Baker domain and the connectivity of $I(f)\cup\{0,\infty\}$. Namely, if $f$ has a doubly connected Baker domain, then $f$ cannot be of type \ref{I3}.

\begin{theorem}
\label{thm:bd-properties}
Let $f$ be a transcendental self-map of $\C^*$, and suppose that $f$ has a doubly connected Baker domain $U$. Then the closure of $U$ in $\widehat{\C}$ contains both $0$ and $\infty$ and, in particular, $I(f)\cup \{0,\infty\}$ is connected.
\end{theorem}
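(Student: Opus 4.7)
The plan divides into two parts. The second assertion is immediate from the first: if $0,\infty\in\overline{U}$ (closure in $\widehat{\C}$), then the connected set $U\cup\{0,\infty\}\subseteq I(f)\cup\{0,\infty\}$ meets both essential singularities, so by Theorem~\ref{thm:I-and-infinity} the set $I(f)\cup\{0,\infty\}$ must be connected. So the task reduces to showing $0,\infty\in\overline{U}$.

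I would argue by contradiction. Applying $z\mapsto 1/z$ if necessary, assume $\infty\notin\overline{U}$, so $U$ is bounded in $\C$; after replacing $f$ by $f^p$, assume also that $U$ is $f$-invariant. Three preliminary facts: by Baker and Dom\'inguez's classification, a doubly connected periodic Fatou component bounded away from $\{0,\infty\}$ is a Herman ring, and Herman rings are disjoint from $I(f)$, so $\overline{U}\cap\{0,\infty\}\neq\emptyset$ and hence $0\in\overline{U}$. Second, since $U\subseteq I(f)$ and $f^n(U)\subseteq U$ is bounded in $\C$, every subsequential limit of the normal family $\{f^n|_U\}$ is a constant in $\{0,\infty\}$, so $f^n|_U\to 0$ locally uniformly. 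Third, by Lemma~\ref{lem:abs-dom} there is a simply connected absorbing set $H\subseteq U$; taking a Jordan curve $\gamma\subseteq U$ generating $\pi_1(U)\cong\Z$, we have $f^N(\gamma)\subseteq H$ for some $N$, which forces $f_*:\pi_1(U)\to\pi_1(U)$ to be the zero map.

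Denote by $C_0,C_\infty$ the two components of $\widehat{\C}\setminus U$, with $\infty\in C_\infty$. If $0\in C_\infty$ (equivalently $\gamma$ does not wind around $0$), then $C_0\subseteq\C^*$ is a compactum bounded away from $\{0,\infty\}$; I would use a maximum modulus argument on the disc $D\subseteq\C$ bounded by $\gamma$, where $D\subseteq\C^*$ since $0\notin D$, giving $\max_D|f^n|=\max_\gamma|f^n|\to 0$ and so $C_0\subseteq D\subseteq I(f)$. A contradiction is then obtained by analysing the dynamics on $\partial C_0$: using the forward invariance of $\partial U$ and the connectedness of $\partial C_0$, either $f(\partial C_0)\subseteq\partial C_0$ --- whence $\partial C_0$ is a compact forward-invariant subset of $\C^*$ bounded away from $\{0,\infty\}$, whose points have bounded, non-escaping orbits, contradicting $\partial C_0\subseteq I(f)$ --- or $f(\partial C_0)\subseteq\partial C_\infty$, in which case iteration together with the density of repelling periodic points in $\partial U\subseteq J(f)$ again produces non-escaping orbits, contradicting $\partial C_0\subseteq I(f)$.

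If instead $0\in C_0$ (so $\gamma$ winds around $0$), then direct max-modulus fails since the relevant region contains the essential singularity. I would combine the triviality of $f_*$ with the fact that the inclusion $\pi_1(U)\to\pi_1(\C^*)$ is an isomorphism to deduce that the global topological degree of $f$ on $\C^*$ is zero; thus $\log f$ has a single-valued holomorphic branch $\hat f:\C^*\to\C$ and $F=\hat f\circ\exp:\C\to\C$ is an entire, $2\pi i$-periodic function for which $\tilde U=\exp^{-1}(U)$ is a simply connected, translation-invariant Baker domain with $\operatorname{Re}(F^n)\to-\infty$. Since $U$ is bounded in $\C$, $\tilde U$ lies in a half-plane $\{\operatorname{Re}(z)<M\}$, and the contradiction is obtained by applying Baker's theorem for entire functions to $F$, or equivalently by transporting the previous case's analysis to a suitable continuum in $\widehat{\C}\setminus\tilde U$. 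The main obstacle is this second case: making the lifting argument rigorous and ensuring the entire lift $F$ inherits enough structure for the contradiction to go through; a secondary technical point is the boundary-dynamics step in the first case, which can be handled either by a direct Montel-type normality argument or by invoking the result from~\cite{martipete1} that components of $\overline{I(f)}$ are unbounded in $\C^*$.
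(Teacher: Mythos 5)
Your overall plan is a genuine alternative to the paper's: you argue by contradiction, normalize so that (say) $\infty\notin\overline{U}$, reduce to $0\in\overline{U}$ via the Baker--Dom\'inguez classification, and split according to whether the core curve $\gamma\subseteq U$ separates $0$ from $\infty$. The paper instead argues directly: it exhausts $U$ by the components $H_n$ of $f^{-n}(H)$ containing a simply connected absorbing set $H$ (Lemma~\ref{lem:abs-dom}), locates the first $n_0$ for which $H_{n_0}$ is doubly connected, and then uses $\textup{ind}(f)=0$ (Lemma~\ref{lem:index}) to show that the components of $f^{-1}(\gamma)$ in $H_{n_0+1}$ are curves unbounded in $\C^*$; since both complementary annular regions of $H_{n_0}$ in $U$ contain such a curve, and the one on the ``outer'' side cannot reach $0$ without crossing $\gamma$, both $0$ and $\infty$ lie in $\widehat{U}$. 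The ingredient $\textup{ind}(f)=0$, which you also derive (correctly) from $f^N(\gamma)\subseteq H$, is shared by both proofs. Your first subcase ($0\in C_\infty$) is correct, and in fact simpler than you make it: once $D\subseteq I_0(f)$ by the maximum principle, the open set $D$ meets $J(f)$ (since $\partial C_0\subseteq D\cap J(f)$), so $D$ contains a repelling periodic point, which cannot escape --- no analysis of $f(\partial C_0)$ is needed.

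The genuine gap is your second subcase, as you yourself flag. After lifting, you obtain a $2\pi i$-periodic entire function $F$ with a simply connected, $2\pi i$-invariant Baker domain $\tilde{U}\subseteq\{\operatorname{Re} z<M\}$ on which $\operatorname{Re}(F^n)\to-\infty$, and you propose to conclude ``by applying Baker's theorem for entire functions to $F$.'' But no off-the-shelf theorem that I know rules this configuration out: Baker's theorem that entire Baker domains are simply connected gives nothing (here $\tilde{U}$ is already simply connected), and there is no general result forbidding a Baker domain of an entire function from lying in a left half-plane, nor one that immediately exploits the $2\pi i$-periodicity of $F$. This is precisely where the paper's $H_n$-exhaustion does the work: it produces, in the outer annular region between $H_{n_0}$ and $\partial U$, a preimage curve of $\gamma$ that is unbounded in $\C^*$ and is trapped on the far side of $\gamma$ from $0$, forcing it to accumulate at $\infty$. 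If you want to salvage your case split, the cleanest repair is to replace the lift-and-invoke step by exactly this curve-tracking argument (or, equivalently, by showing that $f(U_{\mathrm{out}})\subseteq U_{\mathrm{out}}$ would yield a compact forward-invariant subset of $\C^*$ in $I(f)$, and that $f(U_{\mathrm{out}})\subseteq U_{\mathrm{in}}$ is incompatible with the preimage structure of $\gamma$), but as written the second subcase is not complete.
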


To prove Theorem~\ref{thm:bd-properties}, we show that if $f$ has a doubly connected Baker domain~$U$, then $\text{ind}(f)=0$ (see Lemma~\ref{lem:index}); recall that the \textit{index} of $f$, $\textup{ind}(f)$, is the index (or winding number) of the image of any positively oriented simple closed curve separating $0$ and $\infty$ with respect to $0$.\\

\noindent
\textbf{Structure.} We prove Theorem~\ref{thm:I-connected-or-infinitely-many-comp} and Theorem \ref{thm:I-and-infinity} in Sections~\ref{sec:If-connectivity} and \ref{sec:If-union-ess-sing-connectivity}, respectively. Theorem~\ref{thm:escaping-points-in-WD-boundaries} is used in the proof of Theorem~\ref{thm:I-and-infinity}, and is also proved in Section~\ref{sec:If-union-ess-sing-connectivity}. We prove some basic facts about the set $\widetilde{I}_e(f)\subseteq I_e(f)$ in the beginning of Section~\ref{sec:If-union-ess-sing-connectivity}. Examples of functions satisfying properties \ref{I1}, \ref{I2} and \ref{I3} are given in Section~\ref{sec:examples}. Finally, the study of doubly connected Baker domains is in Section~\ref{sec:baker-domains}, where we prove Theorem~\ref{thm:baker-domain} and Theorem~\ref{thm:bd-properties}.\\

\noindent
\textbf{Notation.} We denote the open ball centred at $a \in \C$ and of radius $r>0$ by
\[
D(a, r) \defeq \{ z \in \C : |z-a| < r \}.
\]
If $X\subseteq \C^*$, we denote by $\overline{X}$ and $\widehat{X}$ the closure of $X$ in $\C^*$ and $\widehat{\C}$, respectively. We always use $\partial X$ to refer to the boundary of $X$ in $\C^*$. Recall that we say that $X$ is \textit{unbounded in} $\C^*$ if $\widehat{X}\cap\{0,\infty\}\neq\emptyset$.\\

\noindent
\textbf{Acknowledgments.} We thank the Institute of Mathematics of the Polish Aca\-demy of Sciences, and the Open University, each of which hosted parts of this research.
\section{Connectivity of the escaping set}
\label{sec:If-connectivity}

In this section we prove Theorem~\ref{thm:I-connected-or-infinitely-many-comp}. We begin by giving a more general result, which is a version of \cite[Theorem~5.2]{rippon-stallard11}.

\begin{theorem}
\label{thm:E-connected-or-infinitely-many-comp}
Let $f$ be a transcendental self-map of $\C^*$, and suppose that $E\subseteq \C^*$ is a completely invariant set such that $J(f)=\overline{E\cap J(f)}$. If $E$ is not connected, then it has infinitely many components.
\end{theorem}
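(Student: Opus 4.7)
The plan is to argue by contradiction: suppose $E$ has a finite number of components $E_1,\ldots,E_n$ with $2\leq n<\infty$, and derive a contradiction by combining the blowing-up property of $J(f)$ with the complete invariance of $E$.

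First, I would exploit the finiteness of the component set. Since there are only finitely many components, each $E_i$ is both closed and open in $E$, being the complement in $E$ of a finite union of closed components. In particular, for each $i$ there is an open set $U_i\subseteq\Cstar$ with $U_i\cap E = E_i$, so that $U_i$ is a neighbourhood of $E_i$ in $\Cstar$ disjoint from every other component $E_j$.

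Next, I would pass to a convenient iterate of $f$. Because $f(E_i)$ is connected and contained in $E=f(E)$, it lies in a single component $E_{\sigma(i)}$, which defines a map $\sigma\colon\{1,\ldots,n\}\to\{1,\ldots,n\}$. Since transcendental self-maps of $\Cstar$ are surjective onto $\Cstar$ (their exceptional values lie in $\{0,\infty\}$), the set $f^{-1}(E_j)$ is non-empty, contained in $E$ and clopen in $E$, hence a union of components; this forces $\sigma$ to be surjective, and so a permutation. Letting $N$ be its order and setting $g=f^N$, each component becomes completely invariant under $g$, and $J(g)=J(f)$.

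The contradiction will then come from the blowing-up property of the Julia set. Since $J(f)=\overline{E\cap J(f)}$ is non-empty, some component meets $J(f)$; after relabelling I may assume $E_1\cap J(f)\neq\emptyset$ and pick $z_0\in E_1\cap J(f)$. As $z_0\in E_1\subseteq U_1$, I shrink to a neighbourhood $U\subseteq U_1$ of $z_0$. The blowing-up property at the Julia point $z_0$, applied to $g$ and using that its exceptional set is contained in $\{0,\infty\}$, gives $\bigcup_{k\geq 0} g^k(U)\supseteq\Cstar$; in particular $g^k(U)\cap E_2\neq\emptyset$ for some $k\in\N$. Since $g^{-k}(E_2)=E_2$ by complete $g$-invariance, I conclude $U\cap E_2\neq\emptyset$, contradicting $U\subseteq U_1$ and $U_1\cap E_2=\emptyset$.

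The main obstacle is to execute the two preparatory reductions cleanly: verifying carefully that $\sigma$ really is a permutation (which is what makes $E_j$ itself, not some smaller union, equal to $g^{-k}(E_j)$), and invoking the blowing-up property in the $\Cstar$-setting with the correct observation that exceptional values sit only at $\{0,\infty\}$ and therefore do not obstruct the forward orbit of $U$ from hitting the component $E_2\subseteq\Cstar$.
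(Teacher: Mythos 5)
Your proof is correct, but it takes a somewhat longer route than the paper's. The paper does not pass to an iterate of $f$: it simply picks $z_1\in E_1\cap J(f)$, a disc $D(z_1,r)$ disjoint from $E\setminus E_1$, and applies the blowing-up property to find $N$ with $f^N(D(z_1,r))$ meeting both $E_1$ and $E_2$. Complete invariance of $E$ (namely $f^{-N}(E)\subseteq E$) forces the preimages of those $E$-points in $D(z_1,r)$ to lie in $E_1$, so the \emph{connected} set $f^N(E_1)\subseteq E$ meets two distinct components of $E$ --- contradiction. Thus the paper leans on the complete invariance of $E$ as a whole together with connectedness of a forward image, and never needs any individual component to be invariant. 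Your preparatory reduction --- showing that $f$ induces a permutation $\sigma$ of the components (using surjectivity), passing to $g=f^N$ with $N$ the order of $\sigma$, and getting each $E_i$ completely $g$-invariant --- is a valid and clean alternative, but is extra machinery that the paper's direct argument avoids; the cost is that you must justify surjectivity of $\sigma$ and the clopen/union-of-components step carefully, which you did. Both routes hinge on the same two ingredients (blowing-up property and complete invariance), so the difference is in how the contradiction is extracted: backward invariance of a single component for you, connectedness of $f^N(E_1)$ for the paper.
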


\begin{remark}\normalfont
Note that in $\C^*$ we only have two cases for the connectivity of the set $E$ above, whereas for entire functions \cite[Theorem~5.2]{rippon-stallard11} there is the possibility that $E$ has two components, in which case one of the components must be a singleton consisting of the only possible exceptional point. Picard's theorem implies that holomorphic self-maps of $\C^*$ do not have any exceptional points.
\end{remark}

The proof of Theorem~\ref{thm:E-connected-or-infinitely-many-comp} is based on the following key property of the Julia set, which is known as the \textit{blowing-up property} (see \cite[Theorem~4.1]{radstrom53}).
 
\begin{lemma}
\label{lem:blow-up}
Let $f$ be a transcendental self-map of $\C^*$. If $U\subseteq \C^*$ is an open set which meets $J(f)$ and $K\subseteq \C^*$ is a compact set, then there exists $n_0=n_0(K)\in\N$ such that $f^n(U)\supseteq K$ for all $n\geqslant n_0$.
\end{lemma}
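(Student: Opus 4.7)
The plan is to adapt the classical proof of the blowing-up property to the $\Cstar$ setting, relying on three ingredients: the density of repelling periodic points in $J(f)$; the Koenigs linearisation at such a point; and Montel's theorem combined with Picard's great theorem applied at \emph{both} essential singularities $0$ and $\infty$, which rules out any exceptional values of $f$ in $\Cstar$.

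After possibly shrinking $U$ and passing to an iterate, I may assume that $U$ contains a repelling periodic point $z^*$ of period $p\geqslant 1$. Denote its cycle by $\{z_0,z_1,\ldots,z_{p-1}\}$, with $z_0=z^*$ and $z_{i+1}=f(z_i)$. Koenigs linearisation applied to $f^p$ at $z^*$ yields an open disk $B_0\subset U$ around $z^*$ with $\overline{B_0}\subset f^p(B_0)$. Setting $B_i\defeq f^i(B_0)$ for $i\in\{0,1,\ldots,p-1\}$, the identity
\[
f^p(B_i)=f^i(f^p(B_0))\supseteq f^i(B_0)=B_i
\]
shows that for each $i$ the sequence $\{f^{kp}(B_i)\}_{k\geqslant 0}$ is nested and increasing.

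Each $B_i$ is an open neighbourhood of the repelling periodic point $z_i\in J(f)$, so the family $\{(f^p)^k|_{B_i}\}_{k\geqslant 0}$ fails to be normal at $z_i$. Since $f^p$ maps $\Cstar$ into itself, this family omits $0$ and $\infty$, and by Picard's great theorem applied at the two essential singularities, $f^p$ has no exceptional value in $\Cstar$; hence no third value of $\Cstar$ can be omitted by all iterates $f^{kp}|_{B_i}$. Montel's theorem therefore forces $\bigcup_{k\geqslant 0}f^{kp}(B_i)=\Cstar$, and combining this with the nestedness above and the compactness of $K\subset\Cstar$, there exists $k_i\in\N$ with $K\subset f^{k_ip}(B_i)$. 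Writing any $n\geqslant n_0\defeq\max_{0\leqslant i<p}(k_ip+i)$ uniquely as $n=kp+i$ with $0\leqslant i<p$ and $k\geqslant k_i$, and using $B_0\subset U$, one obtains
\[
f^n(U)\supseteq f^n(B_0)=f^{kp}(B_i)\supseteq K,
\]
as required.

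The main obstacle is promoting the assertion that every point of $\Cstar$ is eventually hit by some iterate of $U$ (an immediate consequence of Montel and the absence of exceptional points) to the uniform statement that \emph{every} sufficiently late iterate contains the prescribed compact set $K$. The proposed bridge is the nestedness $f^{kp}(B_i)\subset f^{(k+1)p}(B_i)$ supplied by Koenigs linearisation at a repelling periodic point of $f$ in $U\cap J(f)$; the genuine $\Cstar$-specific subtlety is that Picard's theorem must be invoked at both essential singularities $0$ and $\infty$ to rule out exceptional values of $f^p$ in $\Cstar$, after which the argument runs in parallel with Baker's original proof for transcendental entire functions.
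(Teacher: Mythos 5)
Your argument is essentially correct, but note that the paper does not prove this lemma at all: it is quoted as a known result, namely Rådström's blowing-up property \cite[Theorem~4.1]{radstrom53}, so there is no in-paper proof to compare against, and what you have supplied is a self-contained proof along the classical Baker--Beardon lines adapted to $\Cstar$. The skeleton is sound: a repelling periodic point $z^*\in U\cap J(f)$ of period $p$, a linearising disc $B_0\subset U$ with $\overline{B_0}\subset f^p(B_0)$, the resulting increasing sequences $f^{kp}(B_i)$ for $B_i=f^i(B_0)$, Montel's theorem with the two omitted values $0,\infty$ forcing $\bigcup_k f^{kp}(B_i)=\Cstar$ (non-normality at $z_i$ holds because $z_i$ is a repelling fixed point of $f^p$, or because $J(f^p)=J(f)$), compactness of $K\subset\Cstar$, and the bookkeeping $n=kp+i$, $n_0=\max_i(k_ip+i)$, which does yield $f^n(U)\supseteq f^{kp}(B_i)\supseteq K$ for all $n\geqslant n_0$. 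Two remarks. First, your opening ingredient, the density of repelling periodic points in $J(f)$ for transcendental self-maps of $\Cstar$, is itself a nontrivial theorem (due to Bhattacharyya, proved by lifting or via the Ahlfors five-islands theorem) and must be cited, not merely assumed; interestingly, Rådström's original 1953 argument predates that result, so the ``official'' proof cannot rest on it. Second, the appeal to Picard's great theorem at the two essential singularities is redundant for the step where you use it: once the family $\{f^{kp}|_{B_i}\}$ is non-normal and omits $0$ and $\infty$, Montel alone shows no third value of $\Cstar$ can be omitted, which is exactly what you need; the absence of exceptional values (which Picard does give, since the omitted Picard value at each essential singularity is forced to be $0$ or $\infty$) is what explains why $K$ may be an arbitrary compact subset of $\Cstar$ in the statement, rather than a hypothesis you need separately in the proof.
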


We now prove Theorem~\ref{thm:E-connected-or-infinitely-many-comp}. We suppose that $E\subseteq \C^*$ is a completely invariant and disconnected set with the property that $J(f)=\overline{E\cap J(f)}$, and we need to show that $E$ has infinitely many components.

\begin{proof}[Proof of Theorem~\ref{thm:E-connected-or-infinitely-many-comp}]

Suppose, by way of contradiction, that the set $E$ is not connected but consists of finitely many components $E_1,E_2,\hdots E_m$, with $m>1$. Without loss of generality, since $J(f)=\overline{E\cap J(f)}$, there exists $z_1\in E_1\cap J(f)$. The fact that $E$ has finitely many components implies that there exists a positive number $r$ sufficiently small that $D(z_1,r)\cap (E\setminus E_1)=\emptyset$. 

By Lemma~\ref{lem:blow-up}, there exists $N\in\mathbb{N}$ such that $f^N(D(z_1,r))\cap E_j\neq \emptyset$, for each $j\in\{1,2\}$. Since the set $E$ is completely invariant, in particular, $f^{-n}(E)\subseteq E$~for all $n\in\N$, and this implies that $f^N(E_1)\cap E_j\neq\emptyset$, for $j\in\{1,2\}$. However, we know that $f^N(E_1)\subseteq E$ is a connected set. This contradiction completes the proof.
\end{proof}

Now we are able to prove Theorem~\ref{thm:I-connected-or-infinitely-many-comp}.

\begin{proof}[Proof of Theorem~\ref{thm:I-connected-or-infinitely-many-comp}]
Suppose that $e\in\{0,\infty\}^{\N_0}$. By \cite[Theorem~1.1]{martipete1}, $I_e(f)\cap J(f)\neq \emptyset$ and, by Lemma~\ref{lem:blow-up} and the complete invariance of $J(f)$, we have $J(f)=\overline{I_e(f)\cap J(f)}$. Then, the result for $I_e(f)$ follows from Theorem~\ref{thm:E-connected-or-infinitely-many-comp} by taking $E=I_e(f)$. The result for $I(f)$ follows similarly. 
\end{proof}

%
%
%
\section{Connectivity of the escaping set union zero and infinity}
\label{sec:If-union-ess-sing-connectivity}

To match the notation used in \cite{martipete1}, observe that, for $e\in\{0,\infty\}^{\N_0}$, the little escaping set with essential itinerary $e$, $I_e(f)$, can be written as the union
$$
I_e(f)=\bigcup_{\ell\in\N_0}\bigcup_{k\in\N_0} I_e^{-\ell,k}(f),
$$
where, for $\ell\in\N_0$ and $k\in\N_0$, 
$$
I_e^{-\ell,k}(f)\defeq\{z\in I(f)\ :\ |f^{n+\ell}(z)|>1 \Leftrightarrow e_{n+k}=\infty \text{ for all } n\in \N_0\}.
$$
Then, the set $\widetilde{I}_e(f)\subseteq I_e(f)$ from the introduction, which consists of the points whose essential itinerary eventually coincides with $e$, can be written as\vspace*{-1pt}
$$
\widetilde{I}_e(f)=\bigcup_{\ell\in\N_0} I_e^{-\ell,\ell}(f).\vspace*{-2pt}
$$
Observe that $f^n(\widetilde{I}_e(f))=\widetilde{I}_{\sigma^n(e)}(f)$, for $n\in\N$. Observe also that $\widetilde{I}_e(f)= \widetilde{I}_{e'}(f)$ if and only if there exists $n\in \N$ such that $\sigma^n(e)=\sigma^n(e')$; otherwise $\widetilde{I}_e(f)\cap \widetilde{I}_{e'}(f)=\emptyset$. Finally, note that\vspace*{-2pt}
$$
I(f)=\bigcup_{e\in\{0,\infty\}^{\N_0}} I_e(f)=\bigcup_{e\in\{0,\infty\}^{\N_0}} \widetilde{I}_e(f),\vspace*{-2pt}
$$
and each little escaping set $I_e(f)$ contains the immediate little escaping sets $\widetilde{I}_{\sigma^n(e)}(f)$ for $n\in\N_0$ (and all their preimages under $f$).

The reason why we are interested in this subset of $I_e(f)$ is that it is the natural set in which an escaping Fatou component lies. Note that in \cite[p. 3]{martipete3} it was observed that if a Fatou component $U$ satifies that $U\cap I_e(f)\neq \emptyset$, then $U\subseteq I_e(f)$, but the following lemma gives more precise information.

\begin{lemma}
\label{lem:U-meets-I}
Let $f$ be a transcendental self-map of $\C^*$, and suppose that $U$ is a Fatou component of $f$. If $U\cap \widetilde{I}_e(f)\neq\emptyset$  for some $e\in\{0,\infty\}^{\N_0}$, then $U\subseteq \widetilde{I}_e(f)$.
\end{lemma}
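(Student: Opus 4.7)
The plan is to leverage Montel/Hurwitz-style normality on the Fatou component $U$ to show two things: first that $U\subseteq I(f)$, and then that the essential itinerary of every point of $U$ eventually agrees with that of a chosen reference point. Fix $z_0\in U\cap \widetilde{I}_e(f)$, so there exists $\ell\geqslant 0$ with $|f^{n+\ell}(z_0)|>1\iff e_{n+\ell}=\infty$ for all $n\geqslant 0$, and let $w\in U$ be arbitrary. I will produce some $\ell'\geqslant 0$ (depending on $w$) with the analogous property, which gives $w\in \widetilde{I}_e(f)$.

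Since $f$ is a self-map of $\C^*$, each iterate $f^n$ avoids $\{0,\infty\}$ on $U$. The family $\{f^n|_U\}$ is normal in $U$ as a family of maps into $\widehat{\C}$, and by a Hurwitz-type argument any locally uniform limit $g$ is either a holomorphic map $U\to\C^*$, or the constant $0$, or the constant $\infty$. Because $z_0\in I(f)$, every limit value $g(z_0)=\lim_k f^{n_k}(z_0)$ lies in $\omega(z_0,f)\subseteq\{0,\infty\}$, which forces $g\equiv 0$ or $g\equiv\infty$. Applying this to every convergent subsequence shows that $\omega(w,f)\subseteq\{0,\infty\}$ for every $w\in U$, and in particular $U\subseteq I(f)$.

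Next I argue that for all sufficiently large $n$, $f^n(w)$ lies on the same side of the unit circle as $f^n(z_0)$. Suppose, towards a contradiction, that there exists a subsequence $(n_k)$ with, say, $|f^{n_k}(z_0)|>1$ and $|f^{n_k}(w)|\leqslant 1$ for all $k$ (the other case is symmetric). By normality, pass to a further subsequence so that $f^{n_k}|_U\to g$ locally uniformly, with $g$ equal to the constant $0$ or the constant $\infty$. If $g\equiv\infty$, then $f^{n_k}(w)\to\infty$, contradicting $|f^{n_k}(w)|\leqslant 1$; if $g\equiv 0$, then $f^{n_k}(z_0)\to 0$, contradicting $|f^{n_k}(z_0)|>1$. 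Either way we reach a contradiction, so there is some $N\geqslant 0$ such that $|f^n(w)|>1\iff|f^n(z_0)|>1$ for all $n\geqslant N$.

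Setting $\ell'\defeq\max(\ell,N)$ gives $|f^{n+\ell'}(w)|>1\iff|f^{n+\ell'}(z_0)|>1\iff e_{n+\ell'}=\infty$ for all $n\geqslant 0$, which is exactly the condition $w\in \widetilde{I}_e(f)$. The main point requiring care is the classification of normal limits on $U$: because $f$ maps $\C^*$ into $\C^*$, the iterates avoid $\{0,\infty\}$ on $U$, so Hurwitz's theorem pins down any $\widehat{\C}$-valued limit as either $\C^*$-valued or a constant essential singularity, and it is this step that rules out the only real alternative that $f^{n_k}(z_0)$ and $f^{n_k}(w)$ could approach \emph{different} essential singularities.
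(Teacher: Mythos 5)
Your proof is correct and rests on the same core idea as the paper's, namely normality of the iterates of $f$ on the Fatou component $U$, but carried out directly rather than by contradiction: the paper locates a point $z'\in U\cap\partial\widetilde{I}_e(f)$ and asserts (without detail) that equicontinuity fails there, whereas you classify every normal limit on $U$ as one of the constants $0$ or $\infty$ via Hurwitz and then observe that two orbits in $U$ cannot straddle the unit circle along an infinite subsequence without contradicting the shared constant limit. This is a clean and complete version of the paper's one-line argument, and also delivers $U\subseteq I(f)$ en route as a free byproduct.
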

\begin{proof}

Choose $z\in U$ and let $e\in \{0,\infty\}^{\N_0}$ be such that $z\in \widetilde{I}_e(f)$. Suppose to the contrary that $U\cap (\C^*\setminus \widetilde{I}_e(f))\neq \emptyset$. Then, we can find a point \mbox{$z'\in U\cap \partial \widetilde{I}_e(f)$}. It is easy to see that the family of iterates of $f$ is not equicontinuous on any neigbhourhood of $z'$, contradicting the Arzel\`a-Ascoli theorem; this proves the lemma.
\end{proof}

Next, we prove \mbox{Theorem~\ref{thm:escaping-points-in-WD-boundaries}}, which says that if $f$ is a transcendental self-map of $\C^*$ and $U$ is a wandering domain of $f$ such that $U\subseteq \widetilde{I}_e(f)$, then $\partial U\setminus \widetilde{I}_e(f)$ has zero harmonic measure relative to $U$. To that end, we require the following lemma (see \cite[Lemma~4.1]{osborne-sixsmith16}), which is a generalisation of \cite[Theorem~1.1]{rippon-stallard11}. Here $d(z,w)$ denotes the spherical distance between two points $z,w\in \widehat{\C}$. If $G\subseteq \C^*$ is a domain and $E\subseteq \partial G$ is a Borel set, then $\omega(z,E,G)$ denotes the harmonic measure of $E$ relative to $G$ at a point $z\in G$ (see \cite{garnett-marshall05} for a precise definition). If $\omega(z,E,G)=0$ for some $z \in G$ and hence all $z \in G$, then we say that $E$ has \textit{harmonic measure zero} relative to $G$.

\begin{lemma}
\label{lem:boundary}
Let $(G_n)_{n\in\N_0}$ be a sequence of disjoint simply connected domains~in~$\widehat{\C}$. Suppose that, for each $n\in \N$, $g_n:\overline{G}_{n-1}\to \overline{G}_n$ is analytic in $G_{n-1}$, continuous in $\overline{G}_{n-1}$, and satisfies $g_n(\partial G_{n-1})\subseteq \partial G_n$. Set
$$
h_n\defeq g_n\circ \cdots \circ g_2\circ g_1,\quad \text{ for } n\in\N.
$$
Suppose that there exist $\xi\in \widehat{\C}$, $\rho\in (0,1)$ and $z_0\in G_0$ such that
$$
d(h_{n}(z_0),\xi)<\rho,\quad \text{ for } n\in \N.
$$
Suppose finally that $c>1$, and let
$$
H\defeq \{z\in \partial G_0\ :\ d(h_{n}(z),\xi)\geqslant c\rho \text{ for infinitely many values of } n\in \N\}.
$$
Then $H$ has harmonic measure zero relative to $G_0$.
\end{lemma}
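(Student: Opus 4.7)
The plan is to reduce the statement to a Borel--Cantelli-style summation via push-forward of harmonic measure through the $h_n$, and then to exploit the pairwise disjointness of the $G_n$ to obtain the required summability.

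For each $n\in\mathbb{N}$, set $F_n\defeq\{w\in\partial G_n : d(w,\xi)\geqslant c\rho\}$, so that
$$H_n\defeq\{z\in\partial G_0 : d(h_n(z),\xi)\geqslant c\rho\}=h_n^{-1}(F_n)\cap\partial G_0$$
and $H=\limsup_n H_n$. The hypotheses on $h_n$ (holomorphic in $G_0$, continuous on $\overline{G_0}$, and mapping $\partial G_0$ into $\partial G_n$) yield the standard push-forward identity
$$
\omega(z_0,H_n,G_0)=\omega(h_n(z_0),F_n,G_n),
$$
obtained by applying the maximum principle to $u\circ h_n$, where $u(w)\defeq\omega(w,F_n,G_n)$. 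By the Borel--Cantelli lemma applied to the positive measure $\omega(z_0,\cdot,G_0)$, it suffices to show that $\sum_n \omega(h_n(z_0),F_n,G_n)<\infty$.

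To bound each summand, set $B\defeq\{w\in\widehat{\mathbb{C}} : d(w,\xi)<c\rho\}$ and let $C_n$ denote the connected component of $G_n\cap B$ containing $h_n(z_0)$, which is non-empty since $d(h_n(z_0),\xi)<\rho<c\rho$. Every path in $G_n$ from $h_n(z_0)$ to a point of $F_n$ must exit $B$, and so must cross $\partial C_n\cap\partial B$; the maximum principle therefore gives
$$
\omega(h_n(z_0),F_n,G_n)\leqslant \omega(h_n(z_0),\partial C_n\cap\partial B,C_n)\leqslant \omega(h_n(z_0),\partial C_n\cap\partial B,B).
$$
Since $h_n(z_0)$ sits in the concentric ball $\{d(\cdot,\xi)<\rho\}$, the Poisson kernel of $B$ at $h_n(z_0)$ is uniformly bounded by a constant $P_0=P_0(\rho,c)$ independent of $n$, so $\omega(h_n(z_0),F_n,G_n)\leqslant P_0\cdot\sigma(\partial C_n\cap\partial B)$, where $\sigma$ is the spherical arc length on $\partial B$. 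The pairwise disjointness of the $G_n$ forces the $C_n\subseteq B$ to be pairwise disjoint, and consequently the sets $\partial C_n\cap\partial B$ fit inside $\partial B$ with pairwise disjoint relative interiors, giving $\sum_n\sigma(\partial C_n\cap\partial B)\leqslant \sigma(\partial B)<\infty$ and hence $\omega(z_0,H,G_0)=0$ via Borel--Cantelli.

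The main obstacle is the last geometric inequality. Although the picture of disjoint open sets contributing disjoint boundary arcs is intuitively clear, pairs of disjoint components $C_n,C_m$ can in principle accumulate on a common arc of $\partial B$ when $\partial G_n$ oscillates wildly near $\partial B$, so the bound $\sum_n\sigma(\partial C_n\cap\partial B)\leqslant \sigma(\partial B)$ is not a formality. The cleanest route is through the prime-end compactification of each $C_n$: disjointness of the $C_n$ ensures that their accessible prime ends on $\partial B$ are genuinely disjoint (modulo a $\sigma$-null junction set), and this translates back to spherical arc length on $\partial B$ to close the argument.
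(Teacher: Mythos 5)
First, a contextual note: the paper does not prove this lemma; it is quoted directly from \cite[Lemma~4.1]{osborne-sixsmith16}, itself a generalisation of \cite[Theorem~1.1]{rippon-stallard11}. So I am comparing your argument with the strategy in those references rather than with an in-paper proof.

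Your overall architecture --- Borel--Cantelli applied to $\omega(z_0,\cdot,G_0)$, subordination of harmonic measure through $h_n$, and localisation to the channel $C_n\subset G_n\cap B$ --- is exactly the right skeleton and matches the standard approach. Two small remarks: the subordination step gives an inequality $\omega(z_0,H_n,G_0)\leqslant\omega(h_n(z_0),F_n,G_n)$ rather than an equality, since $h_n$ is not assumed proper, but the inequality is all you use; and $H_n$ is closed (hence Borel) because $h_n$ is continuous on $\overline{G}_0$.

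The gap you flag at the end, however, is not a technical loose end but a genuine break, and the prime-end patch you sketch does not repair it. The inequality you would need, $\sum_n\sigma(\partial C_n\cap\partial B)\leqslant\sigma(\partial B)$, is simply false for disjoint connected open subsets of a disc. Take $B=\D$, let $g:(0,1)\to\R$ be continuous with $g(r)\to\infty$ as $r\to1^-$ (for instance $g(r)=1/(1-r)$), and set
\[
\Omega_1=\bigl\{re^{i\theta}: 0<r<1,\ \theta\in\bigl(g(r),g(r)+\pi\bigr)\ (\mathrm{mod}\ 2\pi)\bigr\},
\]
with $\Omega_2$ the complementary spiralling half. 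These are disjoint, open, connected, simply connected, each contains points of any fixed smaller disc about $0$, and each satisfies $\partial\Omega_j\cap\partial\D=\partial\D$, so $\sigma(\partial\Omega_1\cap\partial\D)+\sigma(\partial\Omega_2\cap\partial\D)=4\pi>2\pi$; partitioning into $m$ spiralling sectors gives $m$ such domains with total boundary-length $2\pi m$. The overlap is the entire circle, not a $\sigma$-null junction set, so prime ends cannot rescue the estimate.

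The deeper problem is that your use of Carleman's domain monotonicity to pass from $\omega(h_n(z_0),E_n,C_n)$ to $\omega(h_n(z_0),E_n,B)$ discards precisely the information that makes the lemma true. For a spiralling or comb-like $C_n$, $\omega(h_n(z_0),E_n,C_n)$ is very small even when $\sigma(E_n)=\sigma(\partial B)$, because the channel is thin; it is the thinness, not the boundary length, that controls the harmonic measure. What makes the Borel--Cantelli sum converge is spherical area, and this is where the disjointness hypothesis earns its keep: since the $C_n\subset G_n$ are pairwise disjoint, $\sum_n\operatorname{area}(C_n)\leqslant\operatorname{area}(\widehat{\C})<\infty$. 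Any curve in $C_n$ from $h_n(z_0)$ to $E_n=\partial C_n\cap\partial B$ must cross the fixed spherical annulus $A=\{w:\rho<d(w,\xi)<c\rho\}$ inside $C_n\cap A$, whose area is at most $\operatorname{area}(C_n)$; a length--area (extremal-length) estimate across $A$ then yields a bound of the form
\[
\omega\bigl(h_n(z_0),E_n,C_n\bigr)\ \leqslant\ \frac{8}{\pi}\exp\!\left(-\frac{\kappa(\rho,c)}{\operatorname{area}(C_n)}\right),
\]
and since $e^{-\kappa/a}\leqslant a/\kappa$ for all $a>0$, the right-hand side is dominated by a constant times $\operatorname{area}(C_n)$, which is summable. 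This area-to-harmonic-measure step is what you should put in place of your final paragraph; the boundary-length comparison cannot be made to work.
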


We now prove Theorem~\ref{thm:escaping-points-in-WD-boundaries}.

\begin{proof}[Proof of Theorem~\ref{thm:escaping-points-in-WD-boundaries}]
Suppose that $f$ is a transcendental self-map of $\C^*$ with a wandering domain $U\subseteq \widetilde{I}_e(f)$ for some $e\in\{0,\infty\}^{\N_0}$. It suffices to prove that $\partial U\setminus\widetilde{I}_e(f)$ has zero harmonic measure relative to $U$. For $n\in \N_0$, let $U_n$ be the Fatou component containing $f^n(U)$. 

By a result of Baker \cite{baker87}, there is at most one value $N\in\N_0$ such that $U_{N}$ is doubly connected, and $U_n$ is simply connected for all $n\neq N$. It follows from \cite[Theorem 4.3.8]{ransford95} that if $n \in \N$ and $E \subset \partial U_n$ has zero harmonic measure relative to $U_n$, then $f^{-n}(E) \cap \partial U$ has zero harmonic measure relative to $U$, regardless of whether $U$ is simply or doubly connected. We can assume, therefore, that $U_n$ is simply connected for $n \in\N_0$.

Suppose that there is a sequence  $(n_k)_{k\in \N}\subseteq \N$ such that $e_{n_k}=\infty$ for all $k\in\N$; otherwise the sequence $e$ is eventually the constant sequence $0$ and the argument below can be applied with $\xi=0$. We apply Lemma~\ref{lem:boundary} with $\xi=\infty$, $G_0=U$ and, for $k\in\N$, $G_k=U_{n_k}$ and $g_k=f^{n_k-n_{k-1}}$\red{, where $n_0=0$}, so that \red{$h_k=f^{n_k}$}. We obtain a subset $H\subseteq \partial U$, of harmonic measure zero relative to $U$, such that $\partial U\setminus \widetilde{I}_e(f) \subseteq H$; recall that if the essential itineraries of two points disagree on an infinite sequence, then the two points cannot lie in the same set $\widetilde{I}_e(f)$ for $e\in\{0,\infty\}^{\N_0}$. In particular, the set $\partial U\setminus \widetilde{I}_e(f)$, and hence also the subset $\partial U\setminus I_e(f)$, has zero harmonic measure relative to $U$ as required.
\end{proof}

If $f$ is a transcendental self-map of $\C^*$, then there exists a transcendental entire function $\tilde{f}$ such that $\exp \circ \tilde{f}\equiv f\circ\, \exp$; the function $\tilde{f}$ is called a \textit{lift}~of~$f$. If~$U$~is a wandering domain of $f$, then every component $V$ of $\exp^{-1}(U)$ is a wandering domain of~$\tilde{f}$ by \cite[Lemma~2.4]{martipete3}, and since $\exp^{-1}I(f)\subseteq I(\tilde{f})$, we have $V\subseteq I(\tilde{f})$. In this case, \cite[Theorem~1.1]{rippon-stallard11} implies that the set $\partial V$ intersects $I(\tilde{f})$, but Theorem~\ref{thm:escaping-points-in-WD-boundaries} gives more precise information, namely that $\partial V$ contains points $z$ such that $|\text{Re}\,\tilde{f}^n(z)|\to+\infty$ as $n\to \infty$; see the following corollary. 

\begin{corollary}
Let $\tilde{f}$ be a transcendental entire function that is a lift of a trans\-cendental self-map $f$ of $\C^*$. Suppose that $U\subseteq \widetilde{I}_e(f)$ is a wandering domain of $f$ and $V$ is a component of $\exp^{-1}(U)$. Then, apart possibly from a set of harmonic measure zero relative to $V$, points $z\in\partial V$ satisfy that, for any $R>0$, 
\begin{itemize}
\item $\textup{Re}\,\tilde{f}^n(z)> R$, if $e_n=\infty$, and 
\item $\textup{Re}\,\tilde{f}^n(z)<-R$, if $e_n=0$,
\end{itemize}
for all sufficiently large values of $n\in \N_0$.
\end{corollary}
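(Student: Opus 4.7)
The plan is to pull back Theorem~\ref{thm:escaping-points-in-WD-boundaries} along the semiconjugating map $\exp$. Since $\exp\colon \C\to\C^*$ is a local homeomorphism and $V$ is a component of $\exp^{-1}(U)$, we have $\exp(\partial V)\subseteq \partial U$ (with boundaries taken in $\C$ and $\C^*$, respectively). Theorem~\ref{thm:escaping-points-in-WD-boundaries} supplies a Borel set $E\defeq \partial U\setminus \widetilde{I}_e(f)$ of zero harmonic measure relative to $U$. The first key step is to transfer this zero-measure conclusion to $V$: since $\exp|_V$ is a holomorphic covering of $U$ (which, depending on the topology of $U$, is either a conformal isomorphism or a $\Z$-fold covering), the harmonic measure function $\omega(\cdot,E,U)$, which vanishes identically on $U$, pulls back to a non-negative harmonic function on $V$ dominating $\omega(\cdot,\exp^{-1}(E)\cap\partial V,V)$. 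Hence $\exp^{-1}(E)\cap\partial V$ also has zero harmonic measure relative to $V$.

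It now suffices to check the conclusion for a fixed $z\in\partial V$ with $w\defeq\exp(z)\in \widetilde{I}_e(f)$. By the definition of $\widetilde{I}_e(f)$, there exists $\ell=\ell(z)\geqslant 0$ such that $|f^n(w)|>1\Leftrightarrow e_n=\infty$ for every $n\geqslant \ell$; additionally $w\in I(f)$, so $\omega(w,f)\subseteq \{0,\infty\}$. Consequently, along the subsequence of indices $n\geqslant \ell$ on which $e_n=\infty$ (and hence $|f^n(w)|>1$) the orbit can only accumulate at $\infty$, giving $f^n(w)\to\infty$; the symmetric argument gives $f^n(w)\to 0$ along the subsequence where $e_n=0$. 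Translating these limits via the semiconjugacy $\exp\circ\tilde{f}\equiv f\circ\exp$, one has $\exp(\tilde{f}^n(z))=f^n(w)$ and hence $\textup{Re}\,\tilde{f}^n(z)=\log|f^n(w)|$, so the two limits become $\textup{Re}\,\tilde{f}^n(z)\to+\infty$ along $e_n=\infty$ and $\textup{Re}\,\tilde{f}^n(z)\to-\infty$ along $e_n=0$, as required.

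The main point requiring care is the harmonic-measure transfer under $\exp$: it is immediate when $\exp|_V$ is a conformal isomorphism (which happens when $U$ is simply connected), but in the case that $U$ is doubly connected---so that $\exp|_V$ is a genuine $\Z$-covering---some justification is needed. This can be done cleanly via the Brownian-motion characterisation of harmonic measure, projecting paths from $V$ to $U$ with a time change given by the conformal factor, so that the first exit point of the Brownian path on $V$ projects to the first exit point of the corresponding Brownian path on $U$; alternatively, one can invoke a version of \cite[Theorem~4.3.8]{ransford95} suited to covering maps.
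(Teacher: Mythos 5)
Your proposal is correct and follows the route the paper intends: the corollary is stated without proof, immediately after a paragraph that sets up exactly this transfer of Theorem~\ref{thm:escaping-points-in-WD-boundaries} through the semiconjugacy $\exp\circ\tilde f = f\circ\exp$, and your argument fills in the missing details. You have also correctly isolated the only delicate point---propagating the zero-harmonic-measure conclusion from $(\partial U, U)$ to $(\partial V, V)$ under $\exp|_V$, which is a conformal isomorphism unless $U$ is the (unique) doubly connected component separating $0$ from $\infty$, in which case it is the universal covering of an annulus---and your Brownian-motion justification is valid; in fact one has the equality $\omega(z,\exp^{-1}(E)\cap\partial V,V)=\omega(\exp(z),E,U)$, since $\omega(\cdot,E,U)\circ\exp$ is precisely the Perron solution on $V$ for the lifted boundary data.
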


At this point we require some properties of the fast escaping sets $A_e(f)\subseteq I_e(f)$, for $e\in\{0,\infty\}^{\N_0}$, however their exact definition, which is quite complicated, is not important for the matter of this paper. So we refer to \cite[Definition~1.1]{martipete1} and in the following lemma we summarise the properties that we need (see \cite[Theorem~1.1,~Theorem~1.3 and~Theorem~1.5]{martipete1}).
 
 \begin{lemma}
 \label{lem:fast-escaping-set}
 Let $f$ be a transcendental self-map of $\C^*$. For each $e\in\{0,\infty\}^{\N_0}$, there exists a set $A_e(f)\subseteq I_e(f)$ such that
 \begin{enumerate}
 \item[(i)] $A_e(f)\cap J(f)\neq \emptyset$;
 \item[(ii)] $J(f)=\partial A_e(f)$;
 \item[(iii)] all the components of $A_e(f)$ are unbounded in $\C^*$.
 \end{enumerate}
Let $A(f)$ be the union of all $A_e(f)$ for $e\in\{0,\infty\}^{\N_0}$. Then, $A(f)\cap J(f)\neq \emptyset$, $J(f)=\partial A(f)$ and all the components of $A(f)$ are unbounded in $\C^*$.
 \end{lemma}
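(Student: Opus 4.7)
The three numbered properties of each $A_e(f)$ are direct restatements of \cite[Theorem~1.1,~Theorem~1.3~and~Theorem~1.5]{martipete1}, so the plan is simply to cite those results. The only work to be done is to derive the corresponding three statements for the union
\[
A(f)=\bigcup_{e\in\{0,\infty\}^{\N_0}}A_e(f)
\]
from the per-itinerary versions, and I describe how I would do this next.

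Non-emptiness of $A(f)\cap J(f)$ is immediate from (i) applied to any single $e$, since $A_e(f)\subseteq A(f)$. For the unboundedness of the components of $A(f)$, I would argue as follows: given a component $C$ of $A(f)$ and a point $z\in C$, pick $e\in\{0,\infty\}^{\N_0}$ with $z\in A_e(f)$, and let $C'$ be the component of $A_e(f)$ containing $z$. Then $C'$ is a connected subset of $A(f)$ meeting $C$, so $C'\subseteq C$; since $C'$ is unbounded in $\C^*$ by (iii) for $A_e(f)$, so is $C$.

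For the identity $J(f)=\partial A(f)$, the inclusion $J(f)\subseteq\partial A(f)$ follows by combining (ii) for any single $A_e(f)$ with the fact that $J(f)=\partial I(f)$: a point $z\in J(f)$ has arbitrarily small neighbourhoods meeting $A_e(f)\subseteq A(f)$ and meeting $\C^*\setminus I(f)\subseteq \C^*\setminus A(f)$. For the reverse inclusion, suppose for contradiction that $z\in\partial A(f)\cap F(f)$, and let $U$ be the Fatou component containing $z$. Then $U$ is an open neighbourhood of $z$ meeting $A(f)$, hence meeting some $A_e(f)$. An Arzel\`a--Ascoli/normal-families argument exactly parallel to the proof of Lemma~\ref{lem:U-meets-I} then forces $U\subseteq A_e(f)\subseteq A(f)$; since $U$ is open this puts $z$ in the interior of $A(f)$, contradicting $z\in\partial A(f)$. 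Hence $\partial A(f)\subseteq J(f)$.

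I do not expect any substantial obstacle here: the lemma is a convenient packaging of results already in the literature, and the only non-trivial ingredient in passing from each $A_e(f)$ to $A(f)$ is the invariance of Fatou components under membership in $A_e(f)$, which is a routine equicontinuity argument of the same shape as the proof of Lemma~\ref{lem:U-meets-I}.
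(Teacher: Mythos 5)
The paper gives no proof of this lemma at all: the text immediately preceding it simply cites \cite[Theorem~1.1, Theorem~1.3, Theorem~1.5]{martipete1} as the source of the listed properties, including (tacitly) the ones for the union $A(f)$. Your plan --- cite the reference for the per-itinerary statements (i)--(iii) and derive the union statements yourself --- is therefore a reasonable and slightly more explicit version of what the paper does, and most of your derivation is fine: the non-emptiness of $A(f)\cap J(f)$, the unboundedness of components of $A(f)$, and the inclusion $J(f)\subseteq\partial A(f)$ are all argued correctly.

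There is, however, a gap in your argument for $\partial A(f)\subseteq J(f)$. You claim that an Arzel\`a--Ascoli argument ``exactly parallel to the proof of Lemma~\ref{lem:U-meets-I}'' shows $U\subseteq A_e(f)$ once the Fatou component $U$ meets $A_e(f)$. That parallel does not hold. The equicontinuity obstruction in the proof of Lemma~\ref{lem:U-meets-I} comes from the fact that a point of $\widetilde{I}_e(f)$ and a nearby point outside $\widetilde{I}_e(f)$ (non-escaping, or escaping with an essentially different itinerary) have orbits whose $n$th iterates land near $0$ and near $\infty$ at the same time $n$, for infinitely many $n$, so the spherical distance between iterates stays bounded below. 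But $A_e(f)$ and $I_e(f)\setminus A_e(f)$ are distinguished only by the \emph{rate} of escape, not by itinerary: two escaping points with the same essential itinerary, one fast and one slow, give no such obstruction, so no failure of equicontinuity arises from the $A_e$/non-$A_e$ distinction itself. The ingredient you actually need is property (ii): since $J(f)=\partial A_e(f)$ and $U\subseteq F(f)$, we have $U\cap\partial A_e(f)=\emptyset$, so $U\cap A_e(f)$ is open and closed in $U$; as $U$ is connected and meets $A_e(f)$, it follows that $U\subseteq A_e(f)$, which gives the contradiction you want. (More briefly, one may use the general topological fact that $\partial\bigl(\bigcup_\alpha X_\alpha\bigr)\subseteq\overline{\bigcup_\alpha\partial X_\alpha}$, which with (ii) gives $\partial A(f)\subseteq\overline{J(f)}=J(f)$ directly, without mentioning Fatou components at all.)
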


The next lemma is similar to \cite[Lemma~4.1]{rippon-stallard11}. Here, we use the properties of the fast escaping set, and obtain that, in our setting, $\partial G\cap I(f)$ consists of uncountably many points. 

\begin{lemma}
\label{lem:blow-up-consequence}
Let $f$ be a transcendental self-map of $\C^*$. If $G\subseteq \C^*$ is a domain that is bounded in $\C^*$ and such that $G\cap J(f)\neq \emptyset$, then $\partial G\cap I_e(f)\neq\emptyset$ for all $e\in\{0,\infty\}^{\mathbb N_0}$. In particular, $\partial G$ contains uncountably many points of $I(f)$.
\end{lemma}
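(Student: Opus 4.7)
The plan is to imitate the argument of \cite[Lemma~4.1]{rippon-stallard11} by feeding the blow-up property (Lemma~\ref{lem:blow-up}) into the structural information on $A_e(f)$ recorded in Lemma~\ref{lem:fast-escaping-set}. The essential ingredient, in comparison with the entire case, is that every component of the fast escaping set $A_e(f)$ itself---not merely of its closure---is unbounded in $\C^*$; this is exactly what Lemma~\ref{lem:fast-escaping-set}(iii) provides, and it is what will let us `reach' the boundary of $G$ through a connected subset of $I_e(f)$.

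Fix $e \in \{0,\infty\}^{\N_0}$. First I would pick $w \in A_e(f) \cap J(f)$ using Lemma~\ref{lem:fast-escaping-set}(i), and then apply the blow-up property to the open set $G$ (which meets $J(f)$) with the compact set $K = \{w\}$ to obtain $N \in \N$ and $z_0 \in G$ with $f^N(z_0) = w$. Next I would verify from the definition of the fast escaping set in \cite[Definition~1.1]{martipete1} that $f^{-N}(A_e(f)) \subseteq A_e(f)$, so that in fact $z_0 \in A_e(f)$. Let $C$ denote the component of $A_e(f)$ containing $z_0$: by Lemma~\ref{lem:fast-escaping-set}(iii), $C$ is unbounded in $\C^*$, whereas $\overline{G}$ is compact in $\C^*$ by hypothesis, so $C$ cannot be contained in $\overline{G}$ and hence meets both $G$ and $\C^* \setminus \overline{G}$. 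The connectedness of $C$ then forces $C \cap \partial G \neq \emptyset$, and any point of this intersection lies in $A_e(f) \subseteq I_e(f)$, giving the first assertion.

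For the `in particular' clause, I would use that $I_e(f) \cap I_{e'}(f) = \emptyset$ whenever the sequences $e$ and $e'$ are not eventually shifts of one another, and that there are uncountably many such shift-equivalence classes in $\{0,\infty\}^{\N_0}$; picking one point of $\partial G \cap I_e(f)$ as $e$ ranges over representatives then produces uncountably many pairwise distinct points of $\partial G \cap I(f)$. The only step that is not a direct appeal to earlier results is the backward invariance $f^{-N}(A_e(f)) \subseteq A_e(f)$, and this is the main point where I would need to unpack \cite{martipete1} rather than cite it wholesale; I expect it to be immediate from the union-over-shift-levels form in which fast escaping sets with prescribed essential itinerary are usually defined, just as for the classical fast escaping set of a transcendental entire function.
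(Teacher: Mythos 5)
Your argument is correct, but it reaches the point ``$G$ contains a point of $A_e(f)$'' by a different route from the paper. The paper's proof does not use the blow-up property at all at this stage: it simply invokes Lemma~\ref{lem:fast-escaping-set}~(ii), namely $J(f)=\partial A_e(f)$, so that the open set $G$, which meets $J(f)$, automatically contains points of $A_e(f)$; it then concludes exactly as you do, using Lemma~\ref{lem:fast-escaping-set}~(iii) and the boundedness of $G$ in $\C^*$ to force the relevant component of $A_e(f)$ to cross $\partial G$, and handles the uncountability claim the same way. Your version instead pulls a single point $w\in A_e(f)\cap J(f)$ back into $G$ via Lemma~\ref{lem:blow-up} and then appeals to the backward invariance $f^{-N}(A_e(f))\subseteq A_e(f)$. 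That invariance does hold (the sets $A_e(f)$, like the $I_e(f)$, are completely invariant precisely because the defining condition is ``eventually a shift of $e$''; note this would fail for $\widetilde{I}_e(f)$), but it is the one step you must genuinely extract from \cite[Definition~1.1]{martipete1} rather than from the properties already packaged in Lemma~\ref{lem:fast-escaping-set}. The paper's route buys a shorter proof with no invariance argument; yours makes the role of the blow-up property explicit, essentially re-deriving the density statement $J(f)=\partial A_e(f)$ in the one instance needed.
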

\begin{proof}
Let $f$ be a transcendental self-map of $\C^*$ and suppose that $G\subseteq \C^*$ is a domain that is bounded in $\C^*$ and such that $G\cap J(f)\neq \emptyset$. Lemma~\ref{lem:fast-escaping-set}~(ii) implies that $G$~intersects every set $A_e(f)$ for $e\in\{0,\infty\}^{\mathbb{N}_0}$. But, by Lemma~\ref{lem:fast-escaping-set}~(iii), the components of $A_e(f)$ are all unbounded and $G$ is a bounded domain, so\linebreak $\partial G\cap A_e(f) \neq \emptyset$ for all $e\in\{0,\infty\}^{\mathbb{N}_0}$. Hence, $ \partial G\cap I_e(f) \neq \emptyset$ for all $e\in\{0,\infty\}^{\mathbb{N}_0}$, as required. The last part of Lemma~\ref{lem:blow-up-consequence} holds because there are uncountably many disjoint sets $A_e(f)$ for $e\in\{0,\infty\}^{\mathbb{N}_0}$. Note that this follows from Lemma~\ref{lem:fast-escaping-set}~(i) and the fact that the set of sequences $e\in\{0,\infty\}^{\N_0}$ for which the sets $I_e(f)$ are disjoint is uncountable (see \cite[Remark~3.1(ii)]{martipete1}).
\end{proof}

We now give two corollaries of Theorem~\ref{thm:escaping-points-in-WD-boundaries}, and prove Theorem~\ref{thm:I-and-infinity}. These three results are the analogues in $\C^*$ of \cite[Theorem~4.1]{rippon-stallard11}. However, note that they concern $I_e(f)$ instead of the whole of $I(f)$, so in some sense they are more precise, as a component of $I(f)$ may comprise components of several $I_e(f)$.

\begin{corollary}
\label{cor:bdd-comp-of-I-meet-J}
Let $f$ be a transcendental self-map of $\C^*$, and suppose that\linebreak \mbox{$e\in\{0,\infty\}^{\N_0}$}. Then, any component of $I_e(f)$ that is bounded in $\C^*$ meets $J(f)$.
\end{corollary}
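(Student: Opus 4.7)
The plan is to argue by contradiction. Assume that $C$ is a component of $I_e(f)$ that is bounded in $\C^*$ and, aiming for a contradiction, that $C\cap J(f)=\emptyset$. First I would reduce to the Fatou-component case: since $C$ is connected and contained in the open set $F(f)$, it lies in a single Fatou component $U$; the observation recalled at the beginning of Section~\ref{sec:If-union-ess-sing-connectivity} then gives $U\subseteq I_e(f)$, and maximality of $C$ among connected subsets of $I_e(f)$ forces $C=U$. So $U$ is a Fatou component of $f$ that is contained in $I_e(f)$ and bounded in $\C^*$, and the task is to rule this out.

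Next I would dispose of the (pre)periodic case. If $U$ is periodic then it must be a Baker domain, so the forward orbit of any $z\in U$ stays in $U$ and converges to some $p_0\in\{0,\infty\}$; hence $p_0\in\widehat{U}$, contradicting the boundedness of $U$ in $\C^*$. If $U$ is preperiodic, let $k\geq 1$ be minimal such that $f^k(U)\subseteq V$ for a periodic Fatou component $V$; since $U\subseteq I(f)$ forces $V\subseteq I(f)$, the component $V$ is a Baker domain and in particular unbounded in $\C^*$. But $\overline{U}$ is compact in $\C^*$, so $f^k(U)\subseteq f^k(\overline{U})$ is bounded in $\C^*$; therefore $f^k(U)\subsetneq V$. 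By connectedness of $V$ and openness of the proper subset $f^k(U)$, a relative boundary point $w\in V\cap\overline{f^k(U)}\setminus f^k(U)$ exists. Writing $w=\lim_n f^k(u_n)$ with $u_n\in U$ and extracting a subsequential limit $u_*\in\overline{U}$, continuity of $f^k$ on $\C^*$ gives $f^k(u_*)=w$. Then $u_*\in U$ would place $w$ in $f^k(U)$, while $u_*\in\partial U\subseteq J(f)$ would place $f^k(u_*)$ in $J(f)$ by invariance, contradicting $w\in V\subseteq F(f)$. Both alternatives fail, ruling out this case.

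There remains the case in which $U$ is a wandering domain. By Lemma~\ref{lem:U-meets-I}, $U\subseteq\widetilde{I}_{e'}(f)$ for some $e'\in\{0,\infty\}^{\N_0}$ with $I_{e'}(f)=I_e(f)$, and Theorem~\ref{thm:escaping-points-in-WD-boundaries} then supplies a point $z\in\partial U\cap\widetilde{I}_{e'}(f)\subseteq\partial U\cap I_e(f)$. Since $z\in\overline{U}$ and $U$ is open and connected, the set $U\cup\{z\}$ is connected and contained in $I_e(f)$, and the maximality of the component $C=U$ forces $z\in C$. But $z\in\partial U\subseteq J(f)$, so $z\in C\cap J(f)$, contradicting the standing assumption.

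The main obstacle is the intermediate step $f^k(U)\subsetneq V$ in the preperiodic case, which requires balancing openness of $f^k(U)$ in $V$ against compactness of $\overline{U}$ in $\C^*$ together with the invariance $f^k(J(f))\subseteq J(f)$; without this step the known unboundedness of Baker domains does not immediately yield a contradiction for their preimages. The wandering case is then a direct application of Theorem~\ref{thm:escaping-points-in-WD-boundaries} combined with an elementary connectedness argument.
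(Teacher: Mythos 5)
Your proof is correct and follows the same skeleton as the paper's: reduce to the case where the component $C$ is itself a Fatou component $U\subseteq I_e(f)$, rule out $U$ being periodic or preperiodic, and then apply Lemma~\ref{lem:U-meets-I} together with Theorem~\ref{thm:escaping-points-in-WD-boundaries} to the wandering domain $U$ to extract a boundary point of $\partial U\cap I_e(f)$, contradicting maximality of the component. The difference is one of completeness: the paper dispatches the middle step with the single sentence ``Since $G$ is bounded in $\C^*$, $G$ must be a wandering domain,'' whereas you spell out why. For the periodic case your reasoning is essentially the standard one (for a Baker domain of period $p$ the subsequence $f^{np}(z)$ stays in $U$ --- note it is this subsequence, not the full orbit, that remains in $U$ when $p>1$ --- and accumulates only on $\{0,\infty\}$, forcing $\widehat{U}\cap\{0,\infty\}\neq\emptyset$). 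For the preperiodic case you supply a genuinely nontrivial argument that the paper leaves implicit: using compactness of $\overline{U}$ in $\C^*$ and complete invariance of $J(f)$, you show that $f^k(U)$ is a nonempty relatively open subset of the Baker domain $V$ with no relative boundary in $V$, hence $f^k(U)=V$, which is impossible since $f^k(\overline U)$ is compact in $\C^*$ while $V$ is unbounded. This replaces the usual appeal to the ``image of a Fatou component omits at most one point of the target component'' result with a self-contained topological argument, which is a nice touch. In short: same route as the authors, with a small gap carefully filled.
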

\begin{proof}
Suppose to the contrary that $G$ is a component of $I_e(f)$, with $e\in\{0,\infty\}^{\N_0}$, that is bounded in $\C^*$ and does not meet $J(f)$. Let $U$ be the Fatou component containing $G$. By Lemma~\ref{lem:U-meets-I}, $U \subset I_e(f)$, and so $U = G$. Since $G$ is bounded in~$\C^*$, $G$~must be a wandering domain. Theorem~\ref{thm:escaping-points-in-WD-boundaries} then implies that $\partial G$ contains points in $I_e(f)$, contradicting the fact that $G$ is a component of $I_e(f)$.
\end{proof}

\begin{corollary}
\label{cor:bdd-meet-I-bdary-meet-I-too}
Let $f$ be a transcendental self-map of $\C^*$. If $G\subseteq \C^*$ is a domain bounded in $\C^*$ such that $G\cap I_e(f)\neq \emptyset$ for some $e\in\{0,\infty\}^{\mathbb{N}_0}$, then $\partial G\cap I_e(f)\neq\emptyset$.
\end{corollary}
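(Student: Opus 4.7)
The strategy is to split into two cases according to whether $G$ meets the Julia set. If $G\cap J(f)\neq\emptyset$, then Lemma~\ref{lem:blow-up-consequence} applied directly to $G$ provides $\partial G\cap I_e(f)\neq\emptyset$, and we are done.

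Suppose instead that $G\cap J(f)=\emptyset$, so $G\subseteq F(f)$. Since $G$ is a connected open subset of $F(f)$, it lies in a single Fatou component $U$. Picking any $z\in G\cap I_e(f)\subseteq U$, Lemma~\ref{lem:U-meets-I} yields $U\subseteq\widetilde{I}_{\sigma^k(e)}(f)\subseteq I_e(f)$ for some $k\geqslant 0$. Let $C$ be the component of $I_e(f)$ that contains $G$ (and hence $U$). I claim that $C$ properly contains $G$. Indeed, if $U\supsetneq G$ this is immediate, since $C\supseteq U\supsetneq G$; otherwise $U=G$, so $G$ is itself a Fatou component bounded in $\C^*$, and the equality $C=G$ would make $C$ a component of $I_e(f)$ bounded in $\C^*$ that fails to meet $J(f)$, contradicting Corollary~\ref{cor:bdd-comp-of-I-meet-J}.

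Since $C\subseteq I_e(f)$ is connected, $G\subseteq C$ is open, and $C\setminus G\neq\emptyset$, a standard connectedness argument applies: if $C\cap\partial G$ were empty, then $C$ would be contained in the disjoint union of the open sets $G$ and $\C^*\setminus\overline{G}$, and since $C$ is connected and meets $G$, this would force $C\subseteq G$, a contradiction. Hence $C\cap\partial G\neq\emptyset$, and any point of this intersection lies in $I_e(f)$, so $\partial G\cap I_e(f)\neq\emptyset$ as required.

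The main subtlety in this plan is the subcase $G=U$, where one must rule out the possibility that the component $C$ of $I_e(f)$ containing $G$ equals $G$ itself; this is where the previously established Corollary~\ref{cor:bdd-comp-of-I-meet-J} (and thus Theorem~\ref{thm:escaping-points-in-WD-boundaries}, used in its proof) does the real work. Everything else is soft point-set topology together with Lemmas~\ref{lem:U-meets-I} and~\ref{lem:blow-up-consequence}.
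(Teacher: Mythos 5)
Your proof is correct, and it rests on the same two ingredients as the paper's: Corollary~\ref{cor:bdd-comp-of-I-meet-J} to rule out a component of $I_e(f)$ being trapped in $G$, and Lemma~\ref{lem:blow-up-consequence} when $G$ meets $J(f)$. The paper folds both cases into a single contradiction argument (if $\partial G\cap I_e(f)=\emptyset$, the component of $I_e(f)$ meeting $G$ is confined to $G$, hence bounded in $\C^*$, hence meets $J(f)$ by Corollary~\ref{cor:bdd-comp-of-I-meet-J}, so Lemma~\ref{lem:blow-up-consequence} applies), whereas your case split on $G\cap J(f)$ plus the connectedness argument via Lemma~\ref{lem:U-meets-I} is a slightly longer but equivalent arrangement of the same ideas.
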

\begin{proof}
Suppose, by way of contradiction, that $G\subseteq \C^*$ is a domain that is bounded in $\C^*$, $G\cap I_e(f)\neq \emptyset$ and $\partial G\cap I_e(f)=\emptyset$. Let $G'\subseteq G$ be a component of $I_e(f)$. By Corollary~\ref{cor:bdd-comp-of-I-meet-J}, $G'\cap J(f)\neq \emptyset$. Then, it follows from Lemma~\ref{lem:blow-up-consequence} that $\partial G\cap I_e(f)\neq\emptyset$. This proves the corollary.
\end{proof}

We finish this section by proving Theorem~\ref{thm:I-and-infinity}, which says that for a transcendental self-map $f$ of $\C^*$, the set $I(f)\cup\{0,\infty\}$ is either connected or it has two connected components, one containing $0$ and one containing $\infty$. In other words, we prove that $I(f)\cup\{0,\infty\}$ does not have any component that is bounded in $\C^*$.

\begin{proof}[Proof of Theorem~\ref{thm:I-and-infinity}]
Suppose to the contrary that $G\subseteq \C^*$ is a component of $I_e(f)\cup\{0,\infty\}$ that does not meet $\{0,\infty\}$. Then, there exist three disjoint open sets $H,H_0,H_\infty\subseteq \widehat{\C}$ with
$$
I_e(f)\cup\{0,\infty\}\subseteq H\cup H_0\cup H_\infty\quad \text{ and } \quad 0\in H_0,\  \infty\in H_\infty, \ G\subseteq H.
$$ 
In particular, the set $H$ is bounded away from $0$ and $\infty$ and $H\cap I_e(f)\neq \emptyset$. Then, by Corollary~\ref{cor:bdd-meet-I-bdary-meet-I-too}, $\partial H\cap I_e(f)\neq \emptyset$, which is a contradiction. 

Observe that if $I(f)\cup\{0,\infty\}$ has two components $I^0(f)$ and $I^\infty(f)$ containing the points $0$ and $\infty$, respectively, then 
$$
I^0(f)\cap I_e(f)\neq \emptyset\quad \text{ and } I^\infty(f)\cap I_e(f)\neq \emptyset,\quad \text{ for all } e\in\{0,\infty\}^{\N_0}.
$$
This follows from the fact that for every $e\in\{0,\infty\}^{\N_0}$, by Lemma~\ref{lem:fast-escaping-set}, the set~$A_e(f)$ is non-empty and, by Picard's theorem, $A_e(f)$ meets both sets $H_0$ and $H_\infty$. Therefore Theo\-rem~\ref{thm:I-and-infinity} is proved.
\end{proof}

%
%
%
\section{Examples}
\label{sec:examples}
In this section we give examples of transcendental self-maps of $\C^*$ to show that all three cases \ref{I1}, \ref{I2} and \ref{I3} regarding the connectivity of $I(f)$ and $I(f)\cup\{0,\infty\}$ are possible. Recall from the introduction that these are
\begin{enumerate}[label=(I{\arabic*})]
\item $I(f)\cup\{0,\infty\}$ and $I(f)$ are both connected;
\item $I(f)\cup\{0,\infty\}$ is connected and $I(f)$ has infinitely many components;
\item $I(f)\cup\{0,\infty\}$ has two components and $I(f)$ has infinitely many components.
\end{enumerate}
Note that every transcendental self-map~$f$~of~$\C^*$ is of the form
$$f(z)=z^n\exp (g(z)+h(1/z)),$$ 
where $g,h$ are non-constant entire functions and $n=\textup{ind}(f)\in \Z$.

In the first example, we give a transcendental self-map of $\C^*$ of type \ref{I1}, that is, a function $f$ for which $I(f)$ is connected, and hence also $I(f)\cup\{0,\infty\}$ is connected.
\begin{example}\normalfont
\label{ex:1}
In \cite[Example~3.3]{martipete3}, it was proved that for sufficiently large values of $\lambda>0$, the function
$$f(z)\defeq \lambda z \exp (e^{-z}/z)$$
has an invariant Baker domain in which points escape to $\infty$. This was the first explicit example of a transcendental self-map of $\C^*$ with a Baker domain. Later, in \cite[Theorem~1.5]{ems19}, we showed that for this family of functions, provided that \mbox{$\lambda>0$} is large enough, $I(f)$ has the structure of a $\C^*$-spider's web; recall that a connected set $X\subseteq \C^*$ is called a $\C^*$-spider's web if it separates every point of the punctured plane from $\{0, \infty\}$.
\exend
\end{example}
The second example we give is a transcendental self-map $f$ of $\C^*$ that satisfies property \ref{I2}, that is, with a disconnected $I(f)$, but $I(f)\cup\{0,\infty\}$ being connected.

\begin{example}\normalfont
\label{ex:4}
Consider the function
\[
f(z) \defeq 2z \exp\bigl(z^2 + e^{-1/z^{4}}\bigr).
\]
Note that if $z \in \R \setminus \{0\}$, then the term in the main exponential is positive and so the main exponential term is greater than one. Hence $\R \setminus \{0\} \subset I(f)$ and, by Theorem~\ref{thm:I-and-infinity}, the set $I(f) \cup \{0,\infty\}$ is connected \red{(see Figure~\ref{f2})}.

It remains to show that $I(f)$ is not connected for this map. We consider the dynamics on the imaginary axis (minus the origin). Observe that
\[
f(iy) = 2iy \exp\bigl(-y^2 + e^{-1/y^{4}}\bigr) \eqdef i \hat{f}(y),
\]
so we need to consider the action of $\hat{f}$ on the real line (see Figure~\ref{f1}).\\

\begin{figure}[ht!]
	\includegraphics[width=.70\linewidth]{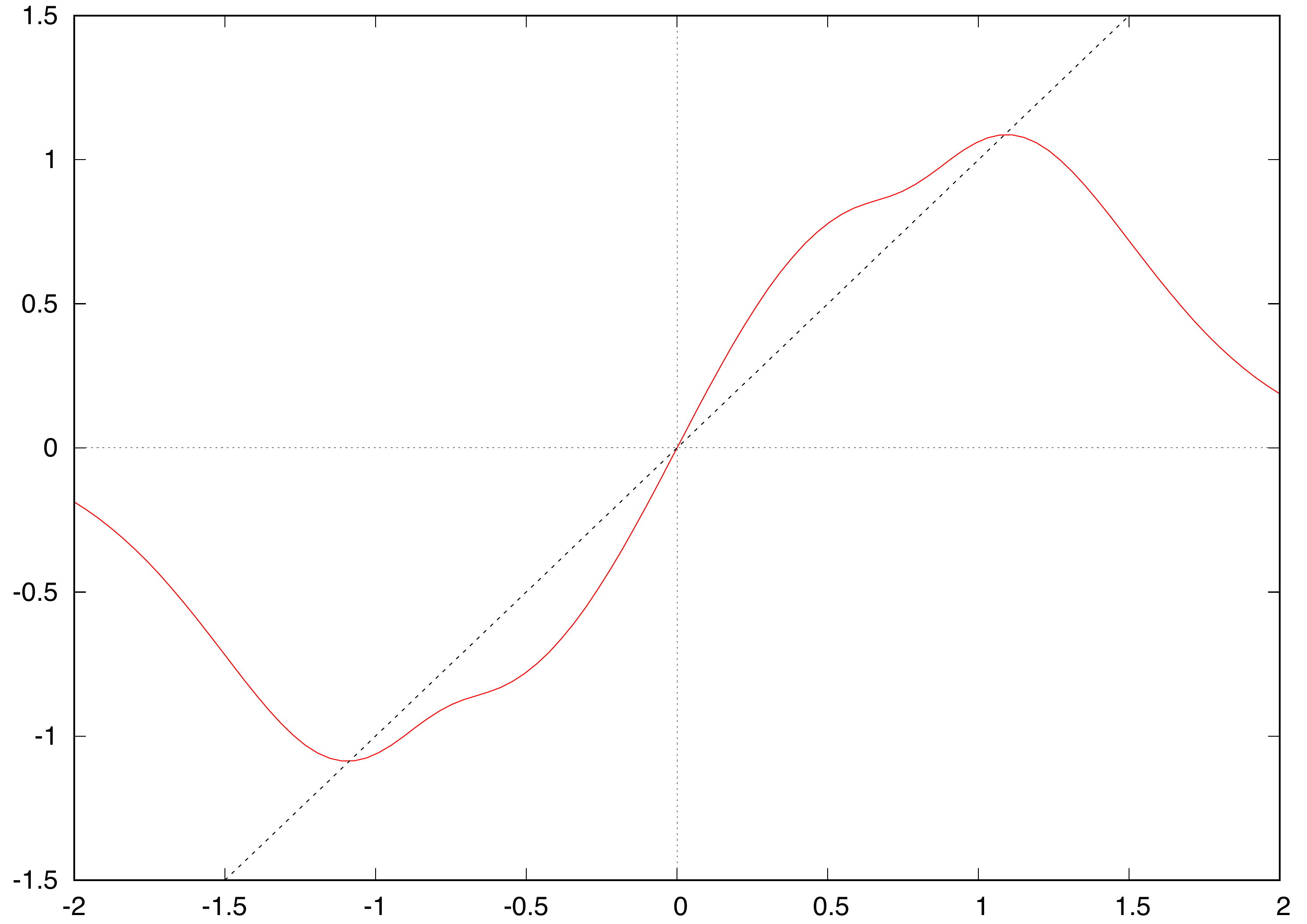}
  \caption{An illustration of the function $\hat{f}$ from Example~\ref{ex:4}.}\label{f1}
\end{figure}

Note that $\hat{f}$ has a unique attracting fixed point on the positive real line at a point $y_0  \approx 1.087$, and it is bounded on the real line. We can deduce that $f$ has two attracting fixed points at $\pm y_0i$, and that $\{ 0 + iy : y > 0 \}$ and $\{ 0 + iy : y < 0 \}$ each lie in an attracting Fatou component. Hence the imaginary axis separates $I(f)$, and the result follows.
\exend
\end{example}

\begin{figure}[ht!]
\vspace*{10pt}
	\includegraphics[width=.60\linewidth]{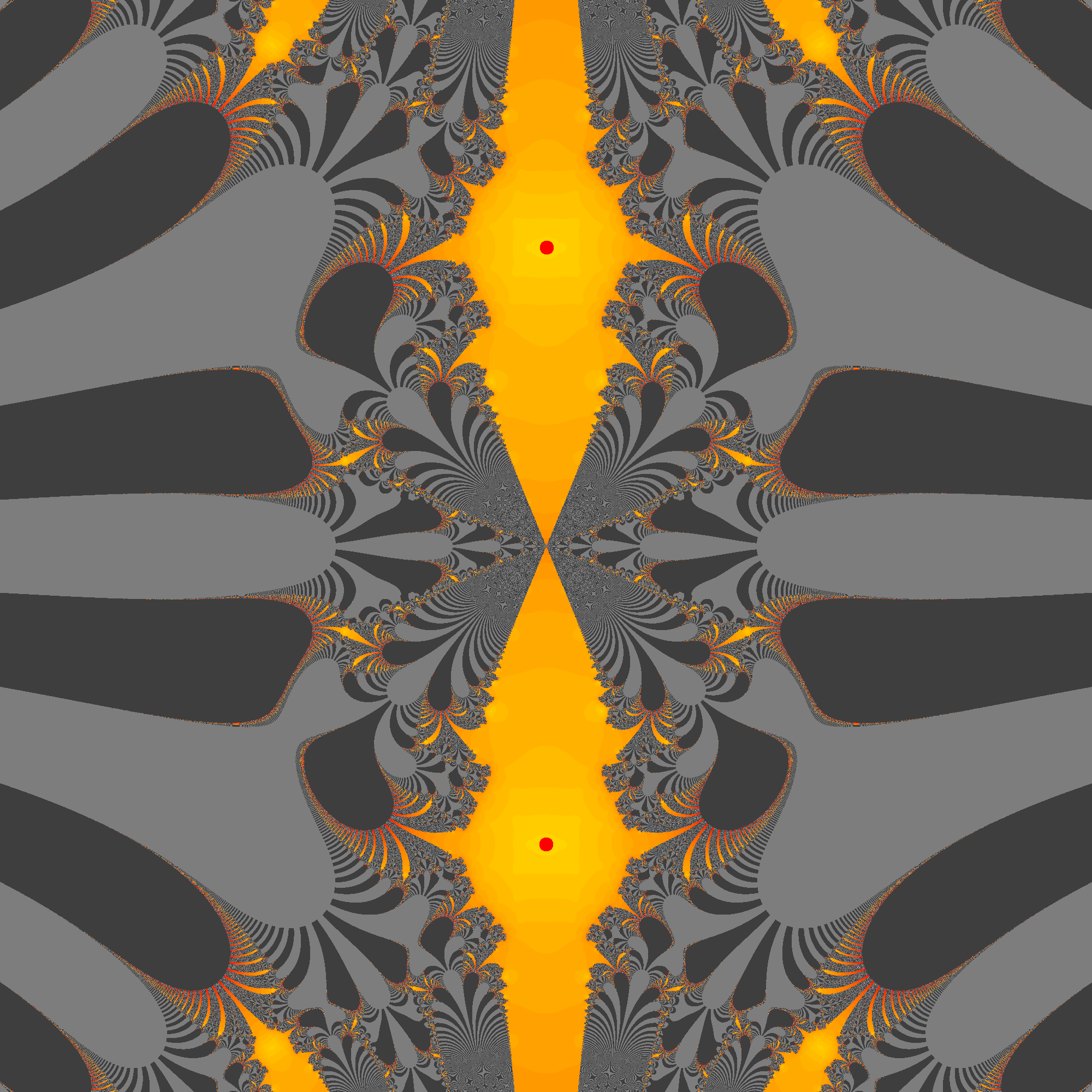}
  \caption{An illustration of the dynamics of the function $f$ from Example~\ref{ex:4}. Escaping points are coloured in grey, and yellow points lie in the basins of attraction of two attracting fixed points $\pm i y_0$ (\reddot{$\bullet$}), which contain the two halves of the imaginary axis.}\label{f2}
\end{figure}

Next, we give examples of transcendental self-maps $f$ of $\C^*$ satisfying property~\ref{I3}. Recall that, for such maps, $I(f)\cup\{0,\infty\}$  is disconnected, and hence also $I(f)$ is disconnected. We give two different situations in which this happens.

First, observe that if $f$ has a doubly connected Fatou component in $\C^*\setminus I(f)$, then $f$ is of type \ref{I3}. One class of functions with a doubly connected Fatou component is that of the so-called functions of disjoint type. We say that a trans\-cendental self-map $f$ of $\C^*$ is of \textit{disjoint type} if $F(f)$ is connected and consists of the immediate basin of attraction of a fixed point, which is doubly connected in $\C^*$ (see \cite[Definition~3.10]{fagella-martipete}). In the next example, we give a disjoint-type transcendental self-map of $\C^*$, which satisfies the additional property that each component of $I(f)$ is contained in a single set $I_e(f)$ for some $e\in\{0,\infty\}^{\N_0}$.


%
\begin{example}\normalfont
\label{ex.disjoint-type}
The function defined by
\[
f(z) \defeq \exp(0.3(z + 1/z))
\]
is a transcendental self-map of $\C^*$ of disjoint type (see \cite[Example~3.12]{fagella-martipete}) and hence satisfies \ref{I3}. Indeed, it can be shown that there exists a round annulus $A$ separating $0$ from $\infty$ that maps compactly inside itself, and this implies that $f$ has an attracting fixed point  $\alpha\in A$ \red{(see Figure~\ref{f3})}. 

Since~$f$~has \textit{finite order} as a transcendental self-map of $\C^*$ (see \cite[Definition~4.1]{fagella-martipete}), it follows from \cite[Theorem~1.7]{fagella-martipete} that each component $X$ of $I(f)$ is a curve that joins a finite point to either $0$ or $\infty$ and is contained in a single little escaping set $I_e(f)$ for some $e \in \{0, \infty\}^{\N_0}$. This contrasts with the situation where $I(f)$ is connected, and hence all sets $I_e(f)$ lie in the same component of $I(f)$; note that there are uncountably many non-empty disjoint sets $I_e(f)$ for $e\in\{0,\infty\}^{\N_0}$. 
\exend
\end{example}

\begin{figure}[ht!]
\vspace*{10pt}
	\includegraphics[width=.49\linewidth]{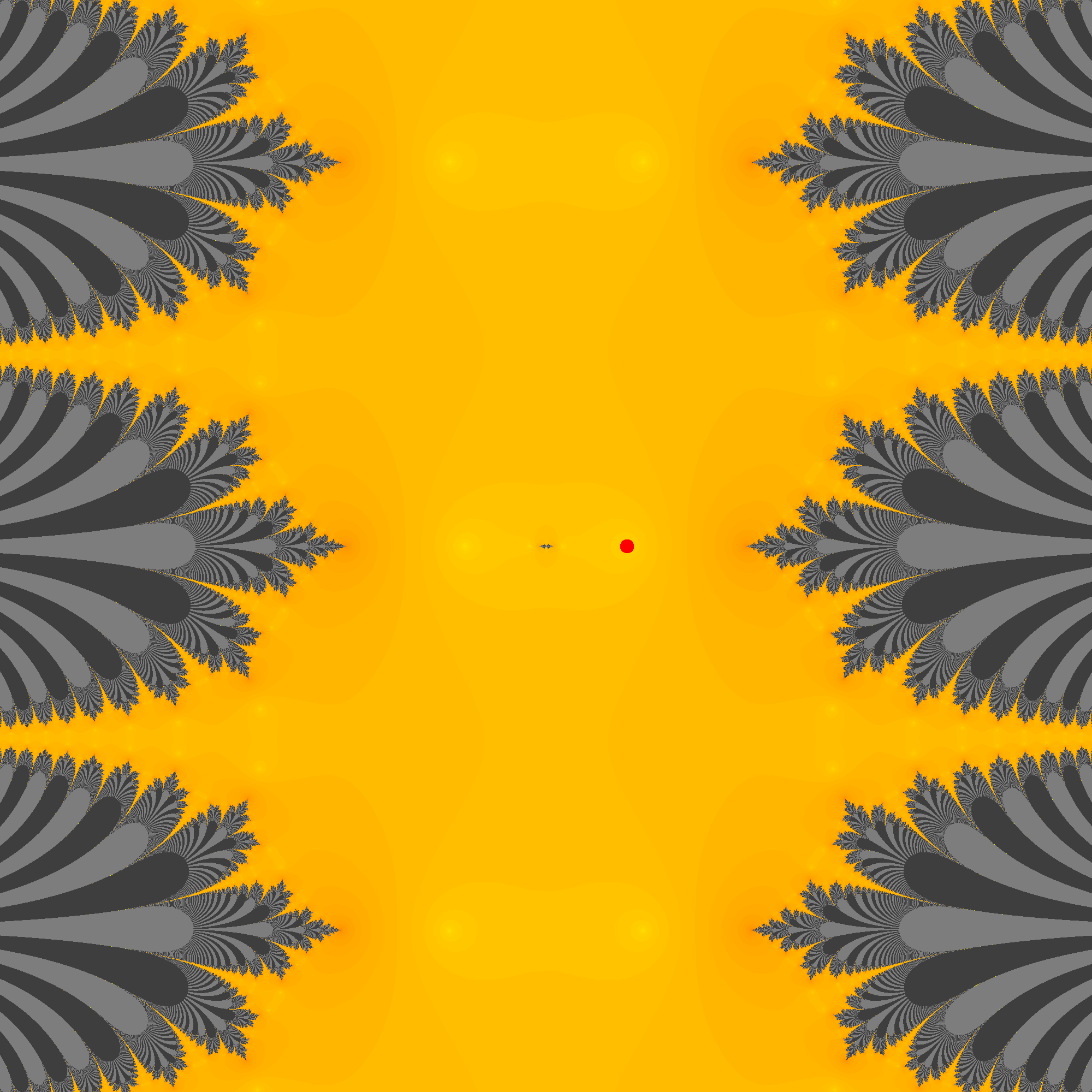} \hspace*{\fill}
	\includegraphics[width=.49\linewidth]{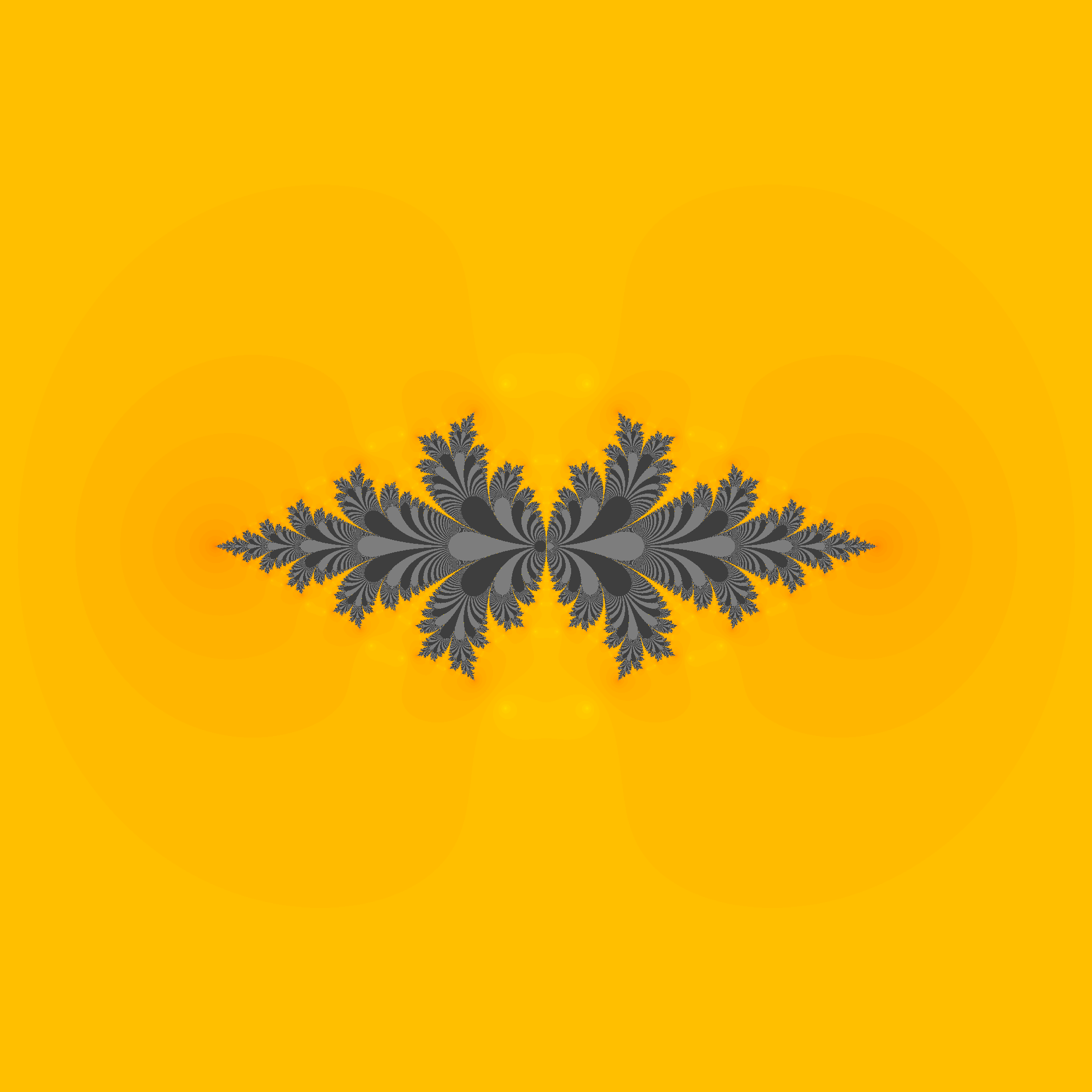}
  \caption{An illustration of the dynamics of the function $f$ from Example~\ref{ex.disjoint-type}. Escaping points are coloured in grey, and yellow points lie in the basin of attraction of an attracting fixed point (\reddot{$\bullet$}) in $\mathbb{R}$, which is doubly connected. In the right, a zoom of the origin.}\label{f3}
\end{figure}

Other examples of functions with doubly connected Fatou components in the complement of $I(f)$, such as Herman rings, were given by Baker and Dom\'inguez; see the exposition in \cite[Section 5]{baker-dominguez98}.

Next, we give a different situation in which $I(f)\cup \{0, \infty\}$ is disconnected, and hence $f$ is of type \ref{I3}. This is the case when $f$ has a forward invariant closed curve around the origin. Note that a transcendental self-map of $\C^*$ can only have one such curve. In the next example we study a well-known family with this property.

\begin{example}\normalfont
\label{ex.arnold}
The complex Arnol'd standard family is given by
\[
f_{\alpha,\beta}(z)\defeq ze^{i\alpha}e^{\beta(z-1/z)/2}, \qfor 0\leqslant \alpha \leqslant 2\pi \text{ and } \beta \geqslant 0,
\]
and the iteration of this family of transcendental self-maps of $\C^*$ was originally studied by Fagella \cite{fagella99}. Since $f_{\alpha, \beta}(\partial \D) \subset \partial \D$, we have that $\partial \D \cap I(f_{\alpha,\beta})= \emptyset$. Thus, for any $0\leqslant \alpha \leqslant 2\pi$ and $\beta \geqslant 0$, the function $f_{\alpha,\beta}$ satisfies property \ref{I3} \red{(see Figure~\ref{f4})}. Note that for some parameters $J(f_{\alpha,\beta})=\C^*$, but otherwise $f_{\alpha,\beta}$ can have a Herman ring, or other types of Fatou components.
\exend
\end{example}

\begin{figure}[ht!]
\vspace*{10pt}
	\includegraphics[width=.60\linewidth]{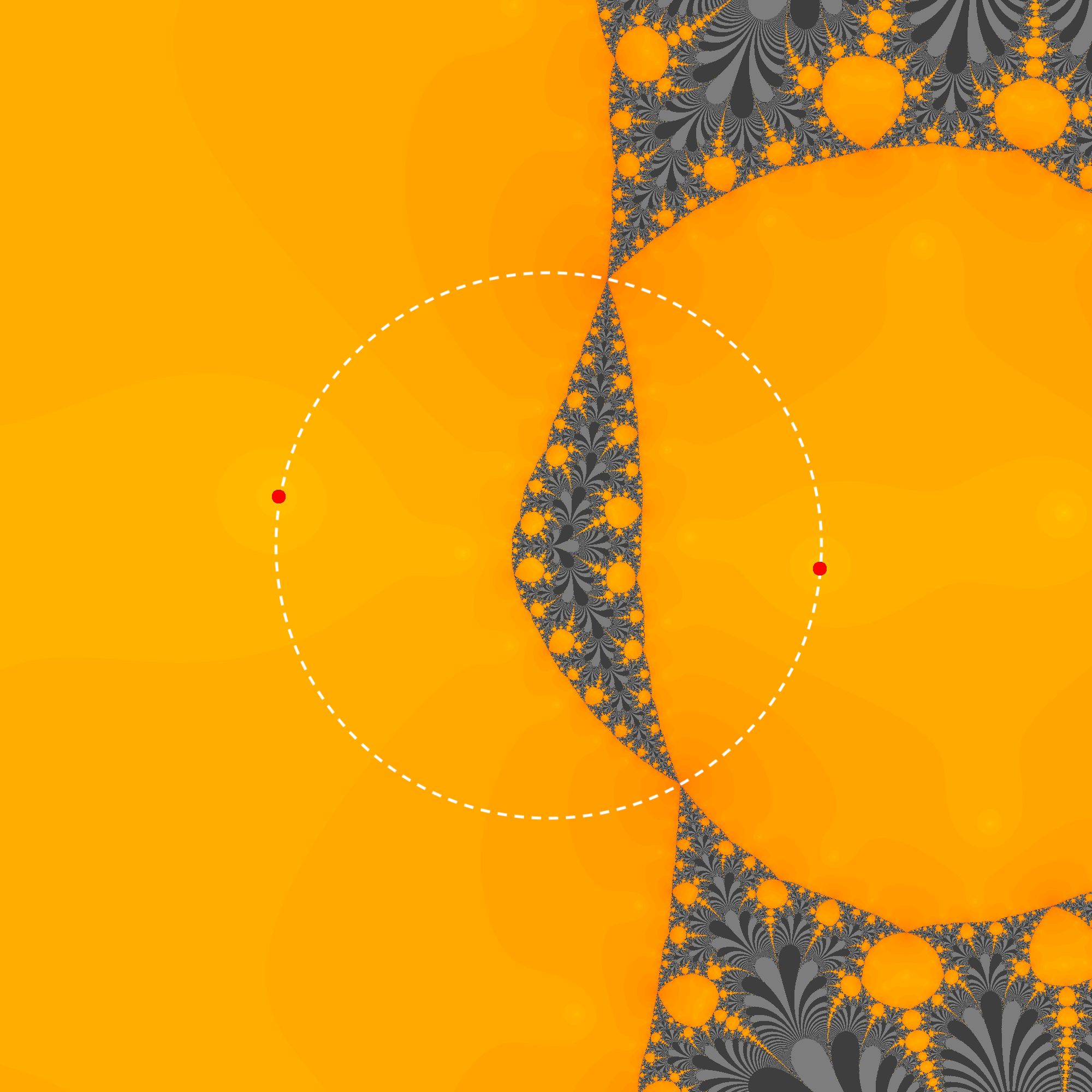}
  \caption{An illustration of the dynamics of the function $f_{\alpha,\beta}$ from Example~\ref{ex.arnold} with $\alpha=3.1$ and $\beta=0.8$. Escaping points are coloured in grey, and yellow points lie in the basins of attraction of an attracting cycle of period two (\reddot{$\bullet$}). The unit circle is invariant and has been drawn in white.}\label{f4}
\end{figure}

\begin{remark}\normalfont
Note that in the previous two examples, the set $I(f)\cup\{0,\infty\}$ is disconnected by a Jordan curve separating $0$ from $\infty$. One might ask if this is necessarily the case when $I(f) \cup \{0, \infty\}$ is disconnected; we do not provide an answer to this question. Observe that if $f$ is a transcendental self-map $f$ of $\C^*$, unless~$f$~has a Herman ring, there is at most one invariant curve that separates $0$ from $\infty$. 
\end{remark}
It is natural to ask if the set $I_e(f)$ can be connected for some $e \in \{0, \infty\}^{\N_0}$. Our goal in the next example is to answer this question in the affirmative by showing that there is a transcendental self-map $f$ of $\C^*$ such that $I_\infty(f)$ is connected. We will also show that for this function, $I(f)$ is connected, and hence $f$ is of type~\ref{I1}. We first prove the following general proposition that will be used in the example; this is based on the proof in \cite{Fast} that the escaping set of the map $z \mapsto \cosh^2 z$ is connected. Recall that if $f$ is a transcendental entire function or a transcendental self-map of $\C^*$, we say that a set $X$ is \textit{backward invariant} under $f$ if $f^{-1}(X) \subset X$.

\begin{proposition}
\label{prop:conn}
Let $f$ be a transcendental entire function such that $J(f) = \C$, or a transcendental self-map of $\C^*$ such that $J(f) = \C^*$. Furthermore, suppose that $f$ has no asymptotic values and only finitely many critical values. If $X$ is a backward invariant set under $f$ and $E \subset X$ is path-connected, not a singleton, and has the property that $E$ meets every component of $f^{-1}(E)$, then $X$ is connected.
\end{proposition}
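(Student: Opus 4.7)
The plan is to show that $A:=\bigcup_{n\geq 0}f^{-n}(E)$ is connected and dense in $S$ (where $S=\C$ or $\Cstar$ according to the hypothesis), and then to invoke the standard topological fact that if $A$ is connected and $A\subseteq X\subseteq\overline{A}$, then $X$ is connected. The inclusion $A\subseteq X$ is immediate from $E\subseteq X$ and the backward invariance of $X$, while $X\subseteq S=\overline{A}$ will follow from the density of $A$.

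For the connectedness of $A$, I would prove by induction on $n$ that $A_n:=\bigcup_{k=0}^{n}f^{-k}(E)$ is connected, with the base case $A_0=E$ given by hypothesis. For the inductive step, given that $A_{n-1}$ is connected, it suffices to show that every connected component $D$ of $f^{-n}(E)$ meets $A_{n-1}$. Pick $w_0\in D$; then $f^{n-1}(w_0)\in f^{-1}(E)$, so it lies in some component $C$ of $f^{-1}(E)$, and by hypothesis there exists $z\in C\cap E$. Choose a path $\gamma$ in $C$ from $f^{n-1}(w_0)$ to $z$ and lift it iteratively under $f$---which is possible because $f$ has no asymptotic values---to a path $\tilde\gamma$ starting at $w_0$ with $f^{n-1}\circ\tilde\gamma=\gamma$. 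The image of $\tilde\gamma$ lies in $f^{-(n-1)}(C)\subseteq f^{-n}(E)$, and being connected and containing $w_0\in D$, it is entirely contained in the component $D$. Its endpoint satisfies $f^{n-1}(\tilde\gamma(1))=z\in E$, so $\tilde\gamma(1)\in D\cap f^{-(n-1)}(E)\subseteq D\cap A_{n-1}$, as required.

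For the density of $A$, observe that $|E|\geq 2$ and there is at most one Picard-exceptional value in $S$ (none in $\Cstar$ for self-maps of $\Cstar$ by Picard applied at the essential singularities, and at most one in $\C$ for a transcendental entire $f$). Hence $E$ contains a non-exceptional point $z_0$, and by the standard fact that the backward orbit of any non-exceptional point is dense in $J(f)$, the set $\bigcup_n f^{-n}(\{z_0\})\subseteq A$ is dense in $J(f)=S$.

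I expect the main obstacle to be making the path-lifting step rigorous. Lifts of arbitrary paths under $f$ exist because of the no-asymptotic-values hypothesis (an asymptotic value is precisely the obstruction to a lift extending over a parameter interval), and the finitely many critical values only cause lifts to ramify at critical points without obstructing their existence. A more delicate point is the existence of the path $\gamma$ within $C$, which implicitly uses that $C$ is path-connected; this should follow from the path-connectedness of $E$ together with the lifting machinery, but needs careful verification.
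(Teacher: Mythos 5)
Your proposal is correct and follows essentially the same route as the paper: build $T=\bigcup_{n\geqslant 0}f^{-n}(E)$, show it is connected by path-lifting (using no asymptotic values and finitely many critical values) and dense using backward orbits of a non-exceptional point, then conclude via $T\subseteq X\subseteq\overline{T}$. Your inductive step with the explicit lift $\tilde\gamma$ simply makes precise the paper's phrase ``by repeated application of this argument.''
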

\begin{proof}
We consider only the case of a transcendental entire function; the other case is almost identical. Observe that since $E$ is path-connected and $f$ has no asymptotic values and only finitely many critical values, every component of $f^{-1}(E)$ is path-connected. Since $E$ meets every component of $f^{-1}(E)$, then $E \cup f^{-1}(E)$ is path-connected. Since $X$ is backward invariant, this is in fact a subset of $X$. By repeated application of this argument, we deduce that $T = \bigcup_{n \geqslant 0} f^{-n}(E)$ is a connected subset of $X$.

Since $J(f)$ is the whole plane, and the backward orbit of any non-exceptional point is dense in $J(f)$, it follows that $\overline{T} = \C$. Hence $T \subset X \subset \overline{T}$, and so $X$ is connected.
\end{proof}

We now give the example.

\begin{example}\normalfont
\label{ex.5}
Set $\tilde{f}(z) \defeq \cosh z$, which is a transcendental entire function that is a lift of the transcendental self-map of $\C^*$ given by
$$
f(z) \defeq \exp\left(0.5(z + 1/z)\right)
$$
Observe that $J(\tilde{f})=\C$. Indeed, $\tilde{f}$ has only two singular values, which are the critical values at $\pm 1$. So $\tilde{f}\in\mathcal S$ and hence $\tilde{f}$ has no Baker domains and no wandering domains. Moreover, it follows from the classification of Fatou components \cite[Section~4.2]{bergweiler93} that since all the singular values of $\tilde{f}$ escape to $\infty$, $\tilde{f}$ cannot have any other type of Fatou component. Berweiler \cite{bergweiler95} showed that if $\tilde{f}$ is a lift of a transcendental self-map $f$ of $\C$, then $J(\tilde{f})=\exp^{-1}(J(f))$, and therefore $J(f)=\C^*$.

Define the set $$E \defeq i\mathbb{R}\cup \bigcup_{n \in \Z} \{ x + iy \in \C : y = n \pi \}.$$ It can be shown that, with $X = I(\tilde{f})$, the hypotheses of Proposition~\ref{prop:conn} are satisfied; indeed, it is a calculation that $E \subset I(\tilde{f})$ and that $E$ contains every preimage of the real line. Hence $I(\tilde{f})$ is connected.

Now put $X = I_\infty(f)$ and take $\exp E$ in place of $E$, which again satisfy the hypotheses of the version for $\C^*$ of Proposition~\ref{prop:conn}, and so the set $I_\infty(f)$ is connected. Note that it also follows from Proposition~\ref{prop:conn} that $I(f)$ is connected. Observe that $I_e(f)$ is not a $\C^*$-spider's web for any $e\in\{0,\infty\}^{\N_0}$, since there are infinitely many sequences $e'\in \{0,\infty\}^{\N_0}\setminus \{e\}$ for which $A_{e'}(f)\cap I_e(f) = \emptyset$ and, by Lemma~\ref{lem:fast-escaping-set} (iii), all the components of $A_{e'}(f)$ are unbounded in $\C^*$.
\exend
\end{example}

\section{Doubly connected Baker domains}
\label{sec:baker-domains}

We conclude the paper with our results concerning doubly connected Baker domains. First, we prove Theorem~\ref{thm:baker-domain}, by constructing the first example of such a domain. To that end, we use the following result from approximation theory (see \cite[Corollary in p.~162]{gaier87}).
\begin{lemma}
\label{lem:approx}
Suppose that $S\subseteq \C$ is a closed set such that $\widehat{\C}\setminus S$ is connected and locally connected at $\infty$, and assume that $S$ lies in a sector
$$
W_\alpha\defeq\{z\in\C\ :\ |\textup{arg}\,z|\leqslant \alpha/2\},
$$
for some $0<\alpha\leqslant 2\pi$. Suppose that $\varepsilon(r)$ is a real function that is continuous and positive for $r\geqslant 0$ and satisfies 
$$
\int_1^{+\infty} r^{-(\pi/\alpha)-1}\log \varepsilon(r) \ dr >-\infty.
$$
If $g_0 : S \to \C$ is continuous on $S$ and holomorphic on the interior of $S$, then there exists an entire function $g$ such that
\begin{equation}
\label{eq:gdef}
|g(z)-g_0(z)|<\varepsilon(|z|),\quad \text{ for all } z\in S. 
\end{equation}
\end{lemma}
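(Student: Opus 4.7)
The plan is to deduce this lemma from Arakelyan's uniform approximation theorem combined with a Keldysh-type weight-function construction. The topological hypothesis on $S$ (namely that $\widehat{\C}\setminus S$ is connected and locally connected at $\infty$) is precisely the Arakelyan condition, so the classical Arakelyan theorem applies: any function continuous on $S$ and holomorphic on its interior can be uniformly approximated on $S$ by entire functions to within an arbitrary positive constant. The work is to pass from uniform error to the prescribed, possibly decaying, error $\varepsilon(|z|)$.

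First, I would apply Arakelyan's theorem to $g_0$ itself to obtain an entire function $\tilde g$ with $|\tilde g(z)-g_0(z)|<1$ for every $z\in S$. This leaves the task of approximating the remainder $g_0-\tilde g$ to within $\varepsilon(|z|)$ on $S$ by a function of the form $\phi\cdot H$ where $\phi$ and $H$ are both entire.

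The decisive step is to construct an entire function $\phi$ that is \emph{nowhere zero} and satisfies
$$
|\phi(z)|\leqslant \tfrac12\,\varepsilon(|z|)\quad\text{ for all }z\in S.
$$
Writing $\phi=e^{\Phi}$ for an entire $\Phi$, this is equivalent to $\textup{Re}\,\Phi(z)\leqslant \log(\varepsilon(|z|)/2)$ on $S\subseteq W_\alpha$. The natural candidate is produced as a Poisson-type integral of the boundary data $\log \varepsilon(|z|)$ on the rays bounding $W_\alpha$; the Phragm\'en--Lindel\"of principle in the sector forces any such $\Phi$ to have growth order at most $\pi/\alpha$, and the convergence of the corresponding canonical-product representation is governed precisely by the hypothesis
$$
\int_1^{+\infty} r^{-(\pi/\alpha)-1}\log\varepsilon(r)\,dr>-\infty,
$$
which is the Blaschke-type integrability condition for this weight in a sector of opening $\alpha$.

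With $\phi$ in hand, set $G_0\defeq(g_0-\tilde g)/\phi$; since $\phi$ has no zeros, $G_0$ is continuous on $S$ and holomorphic on the interior of $S$ (it need not be bounded, but Arakelyan's theorem does not require boundedness). A second application of Arakelyan's theorem yields an entire function $H$ with $|H(z)-G_0(z)|<1$ for all $z\in S$, and then $g\defeq \tilde g+\phi H$ is entire and satisfies
$$
|g(z)-g_0(z)|=|\phi(z)|\,|H(z)-G_0(z)|<|\phi(z)|\leqslant \tfrac12\varepsilon(|z|)<\varepsilon(|z|)
$$
on $S$, which gives \eqref{eq:gdef}. The main obstacle is clearly the construction of the weight $\phi$: the sharpness of the critical exponent $\pi/\alpha$, dictated by the opening of the sector containing $S$, is exactly what Phragm\'en--Lindel\"of permits, and it is this sector geometry that makes the integral condition on $\varepsilon$ both necessary and sufficient for the argument to close.
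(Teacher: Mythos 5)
The paper does not prove Lemma~\ref{lem:approx}; it is stated as a direct quotation of \cite[Corollary in p.~162]{gaier87}, so there is no in-paper argument to compare against. That said, your outline follows the standard route for such tangential (``Carleman-type'') approximation theorems: reduce to Arakelyan's uniform approximation theorem, whose topological hypothesis on $S$ you correctly identify, by dividing through by an entire, zero-free weight $\phi$ with $|\phi(z)|\leqslant\tfrac12\varepsilon(|z|)$ on $S$, the integral condition on $\varepsilon$ being exactly what makes such a $\phi$ exist. The final algebra $g=\tilde g+\phi H$, $|g-g_0|=|\phi|\,|H-G_0|<|\phi|$ is correct. Your preliminary application of Arakelyan's theorem to produce $\tilde g$ is redundant --- one may take $G_0=g_0/\phi$ and $g=\phi H$ directly, since, as you yourself note, Arakelyan's theorem needs no boundedness hypothesis --- but this is harmless.

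The genuine gap is in the construction of $\phi$, which is the substance of the cited result rather than a routine detail. A Poisson integral of $\log\varepsilon$ over the boundary rays of $W_\alpha$ (equivalently, after the conformal change of variable $z\mapsto z^{\pi/\alpha}$, over the boundary of a half-plane --- which is where the exponent $\pi/\alpha$ in the integral condition arises) produces a harmonic function $u$ in the \emph{sector}, not in all of~$\C$; the associated $e^{u+i\tilde u}$ is therefore holomorphic only in $W_\alpha$ and is not yet an entire weight. Passing from this sectorial majorant to an entire one is precisely where the Keldysh--Mergelyan machinery does its work, and your appeal to a ``canonical-product representation'' sits uneasily with your own requirement that $\phi$ be nowhere zero: canonical products have zeros by construction, while $\phi=e^\Phi$ needs $\Phi$ to be entire, which a Poisson integral in a sector does not by itself provide. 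Moreover, the Poisson integral reproduces the boundary data only on $\partial W_\alpha$, whereas you need $|\phi(z)|\leqslant\varepsilon(|z|)/2$ throughout $S$, whose interior may reach deep into the sector; an additional majorisation or maximum-principle argument is required there. These are the steps that would have to be supplied to turn the outline into a proof.
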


This result was used in \cite[Lemma 4.5]{martipete3} to construct transcendental self-maps of $\C^*$ with Baker domains and wandering domains that escape in any possible way (see \cite[Theorem 1.1 and Theorem 1.2]{martipete3}). However, it is not clear if the Baker domains constructed in \cite{martipete3} are simply or doubly connected. 

\begin{proof}[Proof of Theorem~\ref{thm:baker-domain}]
Choose $R>0$ sufficiently large that both
\begin{equation}
\label{eq:Rdef}
\left|\exp\left(\epsilon+z^{-2}\right) - 1\right| \leqslant 4|z|^{-2}, \qfor |\epsilon| \leqslant |z|^{-2} \leqslant \frac{4}{R^2},
\end{equation}
and also
\begin{equation}
\label{eq:Rdef2}
R \geqslant 8|z|^{-1} + 12R|z|^{-2}, \qfor |z| \geqslant \frac{R}{2}.
\end{equation}
Define the set
\[
S  \defeq \{ z \in \C : |z - 3R| \leqslant R \} \cup \{ z \in \C : \operatorname{Re} z \geqslant 3R \},
\]
and let $g_0$ denote the principal branch of the logarithm on $S\subseteq \C\setminus (-\infty,0]$. 

Next set 
\[
\varepsilon(r) \defeq 
\begin{cases} r^{-2}, &\text{for } r > 1, \\
              1     , &\text{for } 0 \leqslant r \leqslant 1.
\end{cases}
\]
Note that $S \subset W_{\pi}$, and
\[
\int_1^{\infty} r^{-2} \log \varepsilon(r) \ dr = -2 \int_1^{\infty} r^{-2} \log r \ dr = -2 > -\infty.
\]
It follows, by Lemma~\ref{lem:approx}, that there exists a transcendental entire function $g$ such that \eqref{eq:gdef} holds.

Now set 
\[
f(z) \defeq \exp(g(z + 3R) + z^{-2}),
\]
so that $f$ is a transcendental self-map of $\C^*$. Let 
$$
S' \defeq \{ z \in \C : R/2 < |z| \text{ and } z + 3R \in S \}.
$$
(see Figure~\ref{fig:g0}). Suppose that $z \in S'$. Then, by \eqref{eq:gdef}, \eqref{eq:Rdef} and \eqref{eq:Rdef2}, we have that
\begin{align*}
\operatorname{Re}(f(z))
     &= \operatorname{Re}(\exp[g(z + 3R) + z^{-2}]), \\
     &= \operatorname{Re}(\exp[g_0(z + 3R)] \cdot \exp[g(z + 3R) - g_0(z + 3R) + z^{-2}]), \\
		 &= \operatorname{Re}((z + 3R) \cdot \exp[g(z + 3R) - g_0(z + 3R) + z^{-2}]), \\
		 &\geqslant (\operatorname{Re}(z) + 3R) \cdot (1 - 4|z|^{-2}) - \left|\operatorname{Im}(z)\right| \cdot 4|z|^{-2}, \\
		 &\geqslant \operatorname{Re}(z) + 3R - 8|z|^{-1} - 12R|z|^{-2}, \\
		 &\geqslant \operatorname{Re}(z) + 2R.
\end{align*}
		 
\begin{figure}[ht!]
\centering
\def\svgwidth{.4\linewidth}
\input{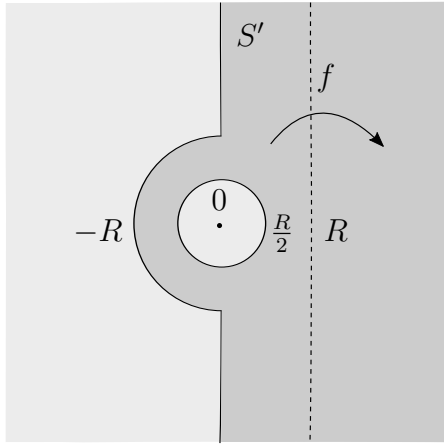}
\caption{The set $S'$ in the proof of Theorem~\ref{thm:baker-domain}.}
\label{fig:g0}
\end{figure}

Thus, $z \in S'$ implies that $\operatorname{Re} f(z) \geqslant \operatorname{Re} z + 2R$ and hence also that $f(z) \in S'$. We can deduce that $S'$ lies in an invariant Baker domain of $f$. Since $S'$ contains the circle $\{ z \in \C : |z| = 3R/4 \}$, the Baker domain must be doubly connected.
\end{proof}

Suppose that $U$ is a Baker domain of a transcendental meromorphic function or trans\-cendental self-map $f$ of $\C^*$. We say that a set $H\subseteq U$ is an \textit{absorbing set} if $f(H)\subseteq H$ and for every compact set $K\subseteq U$, there exists $n\in\N$ such that $f^n(K)\in H$. The existence of simply connected absorbing sets for Baker domains of transcendental entire functions was established by Cowen \cite{cowen}. In \cite{bfjk15}, the authors study when a Baker domain of a transcendental meromorphic functions admits a simply connected absorbing domain. Even though Baker domains can be doubly connected in $\C^*$ as we have seen in Theorem~\ref{thm:baker-domain}, it is easy to show that they always contain a simply connected absorbing domain.

\begin{lemma}
\label{lem:abs-dom}
Let $f$ be a transcendental self-map of $\C^*$ such that $f$ has a Baker domain $U$. Then $U$ contains a simply connected absorbing set.
\end{lemma}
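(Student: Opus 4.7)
The plan is to lift $f|_U$ to the universal cover of $U$ and apply Cowen's theorem \cite{cowen} on absorbing domains for holomorphic self-maps of $\D$. After replacing $f$ by an appropriate iterate, we may assume $U$ is $f$-invariant, so that $f|_U : U \to U$.

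When $U$ is simply connected, a Riemann map $\phi : \D \to U$ conjugates $f|_U$ to a holomorphic self-map $F := \phi^{-1} \circ f|_U \circ \phi$ of $\D$. Since $U \subseteq I(f)$, iterates of $f|_U$ tend locally uniformly to an essential singularity of $f$, so $F$ has no interior fixed point. Cowen's theorem then yields a simply connected forward-invariant domain $V \subseteq \D$ absorbing every compact subset of $\D$ under iteration of $F$, and $H := \phi(V)$ is the required simply connected absorbing set in $U$.

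When $U$ is doubly connected, let $\pi : \D \to U$ be a universal covering and $F : \D \to \D$ a lift of $f|_U$, so that $\pi \circ F = f \circ \pi$. As above, $F$ has no interior fixed point, and by the Denjoy--Wolff theorem there exists $\zeta \in \partial \D$ with $F^n \to \zeta$ locally uniformly. Cowen's theorem produces a simply connected forward-invariant absorbing domain $V \subseteq \D$. The deck group of $\pi$ is cyclic, generated by a M\"obius transformation $T$; the plan is to trim $V$ to a subdomain $V' \subseteq V$ on which $\pi$ is injective, so that $H := \pi(V')$ is a simply connected subset of $U$. The forward $f$-invariance of $H$ and its absorbing property in $U$ will then follow from the corresponding properties of $V'$, using that each compact $K \subseteq U$ admits a compact lift $\widetilde K \subseteq \D$ on which $F^n(\widetilde K)$ is eventually captured by $V$, and exploiting the deck equivariance to conclude $f^n(K) \subseteq H$ for all large $n$.

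The main obstacle is the trimming step. Because orbits in $U$ accumulate on one of the essential singularities $0$ or $\infty$, the Denjoy--Wolff point $\zeta$ coincides with a fixed point of the deck action on $\partial \D$, and in particular horocyclic neighborhoods of $\zeta$ are invariant under $\langle T \rangle$; consequently one cannot simply shrink $V$ into a horocycle to fit inside a fundamental domain. The resolution is to cut $V$ along a curve lying in the complement of a chosen forward orbit of $F$ in $\D$, carving out a single sheet of the covering that still contains the tail of that orbit; the Koenigs linearisation $\varphi : V \to \Omega$ provided by Cowen's theorem (with $\varphi \circ F = \varphi + 1$ on a half-plane or $\C$) makes this cut canonical and preserves both forward invariance and absorption. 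Verifying that the resulting $V'$ retains the absorbing property for \emph{every} compact subset of $\D$ (not just for the orbit used to define the cut) is the technical heart of the argument.
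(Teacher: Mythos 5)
Your approach is genuinely different from the paper's, and the core technical step is left as a gap. The paper does not work with the abstract universal cover $\pi : \D \to U$ at all; instead it uses \cite[Lemma~3.5]{martipete3} to choose a lift $\tilde{f}$ of $f$ under $\exp$ for which a component $V$ of $\exp^{-1}(U)$ is a Baker domain of the \emph{transcendental entire function} $\tilde{f}$, then invokes \cite[Theorem~5.1]{rippon08} to obtain a simply connected absorbing set $H'\subseteq V$, and finally takes $H = \exp(H')$. This reduces the problem to one where the absorbing-set machinery is already available for entire functions, and the only projection involved is $\exp$ restricted to a single component of $\exp^{-1}(U)$, so the covering issues you are wrestling with do not arise in the same form.

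The gap in your proposal is the ``trimming'' step in the doubly connected case, which you yourself flag as ``the technical heart of the argument'' and leave unverified. Concretely: after cutting $V$ to a subset $V'$ on which $\pi$ is injective, you need (i) that $V'$ is still forward-invariant under $F$, and (ii) that $\pi(V')$ absorbs compact subsets of $U$. For (i), a cut that is a vertical line in the Koenigs coordinate $\varphi$ does give $F$-invariance of a half-plane preimage, but that preimage need not be a set on which $\pi$ is injective, because $\varphi$ has no reason to be equivariant with respect to the deck transformation $T$; those two requirements pull in different directions and you do not reconcile them. For (ii), what you actually need is $\pi(V)\subseteq \bigcup_{k}T^k(V')$, i.e.\ every point of the untrimmed $V$ has a deck-translate in $V'$, and this fails for a naive trim since $V$ is not $T$-invariant. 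Also, as a side point, your assertion that ``horocyclic neighborhoods of $\zeta$ are invariant under $\langle T\rangle$'' presumes $T$ is parabolic; if $U$ has finite modulus then $T$ is hyperbolic and horocycles at $\zeta$ are not $T$-invariant, so even your diagnosis of why the naive horocycle-shrinking fails is not quite right in that case. The paper's $\exp$-lift sidesteps all of this.
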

\begin{proof}
By  \cite[Lemma~3.5]{martipete3}, we can find a suitable lift $\tilde{f}$ of $f$ for which a component $V$ of $\exp^{-1}(U)$ is a Baker domain for $\tilde{f}$. Then, it follows from the classification of Baker domains for transcendental entire functions that the Baker domain $V$ of $\tilde{f}$ contains an absorbing set $H'$ \cite[Theorem~5.1]{rippon08}. Then, the set $H=\exp(H')$ is an absorbing set for $U$. Indeed, for every $z\in U$, there exists a point $w\in \exp^{-1}(z)$ that lies in $V$ and if $n\in\N$ is such that $\tilde{f}^n(w)\in H'$, then $f^n(z)\in H$, as we wanted to show.  
\end{proof}

Observe that this means that we can transfer the classification of Baker domains for transcendental entire functions (see, for example, \cite[Section~5]{rippon08}) to transcendental self-maps of $\C^*$ by using the fact that $f$ and any of its lifts $\tilde{f}$ are conjugated on the absorbing set.

In the case that $f$ has a doubly connected Baker domain, we can deduce some additional properties of $f$. Recall that given a transcendental self-map $f$ of $\C^*$, we define the \textit{index of} $f$ as the index (or winding number) of $f(\gamma)$ with respect to $0$, where $\gamma\subseteq \C^*$ is any positively oriented simple closed curve around $0$. This quantity is a topological invariant of $f$. We prove that $\text{ind}(f)=0$ when $f$ has a doubly connected Baker domain.

\begin{lemma}
\label{lem:index}
Let $f$ be a transcendental self-map of $\C^*$ such that $f$ has a doubly connected Baker domain. Then $\textup{ind}(f)=0$.
\end{lemma}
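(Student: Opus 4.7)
My plan is to combine the existence of a simply connected absorbing set (Lemma~\ref{lem:abs-dom}) with the multiplicative behaviour of the winding number under composition. First, I will reduce to the case that $U$ separates $0$ from $\infty$ in $\widehat{\C}$, so that $U$ contains a positively oriented simple closed curve $\gamma$ with $\textup{ind}(\gamma,0)=1$. Once this reduction is in place, the argument will be short: if $H\subseteq U$ is a simply connected absorbing set, then compactness of $\gamma$ gives some $n\in\N$ with $f^n(\gamma)\subseteq H$, and since $H$ is simply connected and contained in $\Cstar$, the curve $f^n(\gamma)$ is null-homotopic in $\Cstar$, so $\textup{ind}(f^n(\gamma),0)=0$. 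From the definition of $\textup{ind}(f)$ and homotopy invariance of winding numbers one has
$$
\textup{ind}(f)^n=\textup{ind}(f)^n\cdot\textup{ind}(\gamma,0)=\textup{ind}(f^n(\gamma),0)=0,
$$
and hence $\textup{ind}(f)=0$.

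The main obstacle is therefore the reduction itself, namely showing that $U$ does separate $0$ from $\infty$. Suppose, for contradiction, that $0$ and $\infty$ lie in the same component of $\widehat{\C}\setminus U$. Then the inclusion-induced homomorphism $\pi_1(U)\to\pi_1(\Cstar)$ is trivial, so $\exp$ restricts to a homeomorphism from each component of $\exp^{-1}(U)$ onto $U$. Arguing as in the proof of Lemma~\ref{lem:abs-dom}, I will pick a lift $\tilde f$ of $f$ together with a component $V_0$ of $\exp^{-1}(U)$ satisfying $\tilde f(V_0)\subseteq V_0$. I then plan to show that $V_0$ is itself a Fatou component of $\tilde f$: if $W$ denotes the Fatou component of $\tilde f$ containing $V_0$, then $\exp(W)$ is a connected open subset of $F(f)$ meeting $U$, so $\exp(W)\subseteq U$, whence $W\subseteq\exp^{-1}(U)$ and, by connectedness of $W$, $W=V_0$. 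Thus $V_0$ is an invariant Fatou component of the transcendental entire function $\tilde f$ meeting $I(\tilde f)$, i.e., a Baker domain of $\tilde f$; but $V_0$ is biholomorphic to $U$ and is therefore doubly connected, contradicting Baker's theorem that Baker domains of transcendental entire functions are simply connected. This contradiction will establish the reduction and complete the proof.
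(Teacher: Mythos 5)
Your proposal is correct, and its core computation is exactly the paper's argument: take the simply connected absorbing set $H$ from Lemma~\ref{lem:abs-dom}, push an essential curve $\gamma\subseteq U$ forward until $f^n(\gamma)\subseteq H$, and use $\textup{ind}(f^n(\gamma),0)=\textup{ind}(f)^n\cdot\textup{ind}(\gamma,0)$ together with the null-homotopy of $f^n(\gamma)$ in $\Cstar$. Where you go beyond the paper is in the reduction step: the paper simply writes ``suppose that $\gamma\subseteq U$ is a positively oriented simple closed curve around $0$,'' i.e.\ it takes for granted that the doubly connected Baker domain separates $0$ from $\infty$, whereas you justify this. Your justification is sound: if $0$ and $\infty$ lay in the same complementary component, the core curve of $U$ would have winding number zero about the origin, so $\pi_1(U)\to\pi_1(\Cstar)$ is trivial, $\exp$ maps each component of $\exp^{-1}(U)$ biholomorphically onto $U$, and the standard choice of lift makes one such component an invariant, doubly connected Baker domain of a transcendental entire function, contradicting Baker's theorem that such domains are simply connected. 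This is a genuine (if small) improvement in rigour over the printed proof; the identification of the lifted component with a full Fatou component of $\tilde f$ via $\exp(W)\subseteq F(f)$ is the same device used elsewhere in the paper (e.g.\ via Bergweiler's $J(\tilde f)=\exp^{-1}(J(f))$). The only detail neither you nor the paper spells out is that for a Baker domain of period $p>1$ one should first pass to $f^p$; since $\textup{ind}(f^p)=\textup{ind}(f)^p$, this changes nothing.
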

\begin{proof}
Let $U$ be the doubly connected Baker domain of $f$ and suppose that $H\subseteq U$ is a simply connected absorbing set. Suppose that $\gamma\subseteq U$ is a positively oriented simple closed curve around $0$. Let $n\in\N$ be such that $f^n(\gamma)\subseteq H$. Then, 
$$
0=\text{ind}(f^n(\gamma),0)=\text{ind}(\gamma,0)\cdot \text{ind}(f)^n=\text{ind}(f)^n,
$$
and so $\text{ind}(f)=0$ as required.
\end{proof}

We conclude the paper with the proof of Theorem~\ref{thm:bd-properties}, that relates the fact that $f$ has a doubly connected Baker domain with the connectivity of the set $I(f)\cup \{0,\infty\}$. 

\begin{proof}[Proof of Theorem~\ref{thm:bd-properties}]
Let $U$ be the doubly connected Baker domain of $f$ and, by Lemma~\ref{lem:abs-dom}, let $H\subseteq U$ be a simply connected absorbing set. By taking a suitable iterate of $f$, we may assume without loss of generality that $U$ is invariant. Consider $H_n$ to be the component of $f^{-n}(H)$ that contains $H$. Then, we can write
$$
U=\bigcup_{n\in \N} H_n,
$$
where $H_n\subseteq H_{n+1}$ for $n\in \N$. Since $U$ is doubly connected, there exists $n_0\in\N$ such that $H_n$ is doubly connected and $n_0$ is minimal with this property; note that the union of an increasing sequence of open simply connected sets is simply connected. We claim that the closure $\widehat{H}_{n_0+1}$ necessarily contains both $0$ and $\infty$. Indeed, let $\gamma\subseteq H_{n_0}$ be a curve of index $1$ around $0$. Each of the components of the preimage of $\gamma$ in $H_{n_0+1}$ is a cuve $\gamma'$ that is unbounded in $\C^*$. Observe that each of the complementary components of $H_{n_0}$ in $U$ must contain a component of the preimage of $\gamma$\red{, and so the closure of $U$ in $\widehat{\mathbb{C}}$ contains $\{0, \infty\}$.} This proves the claim and the theorem.
\end{proof}

\bibliographystyle{amsalpha}

\bibliography{EscapingSet}

\end{document}